\DeclareMathOperator{\co}{co}
\renewcommand{\geq}{\geqslant}
\renewcommand{\leq}{\leqslant}
\DeclareMathOperator{\spann}{span}
\newcommand{\Lip}{{\mathrm{Lip}}_0}
\newcommand{\aconv}{\mathop\mathrm{aconv}}
\newcommand{\ext}[1]{\operatorname{ext}\left(#1\right)}
\newtheorem{theorem}{Theorem}[section]
\newtheorem{lemma}[theorem]{Lemma}
\newtheorem{proposition}[theorem]{Proposition}
\theoremstyle{definition}
\newtheorem{definition}[theorem]{Definition}
\newtheorem{example}[theorem]{Example}
\theoremstyle{remark}
\newtheorem{remark}[theorem]{Remark}
\numberwithin{equation}{section}
\def\fnote#1{\footnote}
\def\ignora#1{}
\def\n3#1{\left\vert  \! \left\vert \! \left\vert \, #1 \, \right\vert \!
  \right\vert \! \right\vert }
\newcommand{\pten}{\ensuremath{\widehat{\otimes}_\pi}}
\begin{document}

\title{ Banach spaces which always produce octahedral spaces of operators }

\author{Abraham Rueda Zoca}
\address[A. Rueda Zoca]{Universidad de Granada, Facultad de Ciencias.
Departamento de An\'{a}lisis Matem\'{a}tico, 18071-Granada
(Spain)
	\newline
	\href{https://orcid.org/0000-0003-0718-1353}{ORCID: \texttt{0000-0003-0718-1353} }}
\email{\texttt{abrahamrueda@ugr.es}}
\urladdr{\url{https://arzenglish.wordpress.com}}

\thanks{The research of Abraham Rueda Zoca was supported by MCIN/AEI/10.13039/501100011033: Grants PID2021-122126NB-C31 and PID2021-122126NB-C32;  and by Junta de Andaluc\'ia: Grants FQM-0185 and PY20\_00255.}

\subjclass[2020]{46B06, 46B20, 46B28}

\keywords {Spaces of operators; Universally octahedral; Finite representability. }

\maketitle

\markboth{ABRAHAM RUEDA ZOCA}{BANACH SPACES WHICH ALWAYS PRODUCE OCTAHEDRAL ...}

\begin{abstract}
We characterise those Banach spaces $X$ which satisfy that $L(Y,X)$ is octahedral for every non-zero Banach space $Y$. They are those satisfying that, for every finite dimensional subspace $Z$, $\ell_\infty$ can be finitely-representable in a part of $X$ kind of $\ell_1$-orthogonal to $Z$. We also prove that $L(Y,X)$ is octahedral for every $Y$ if, and only if, $L(\ell_p^n,X)$ is octahedral for every $n\in\mathbb N$ and $1<p<\infty$. Finally, we find examples of Banach spaces satisfying the above conditions like $\Lip(M)$ spaces with octahedral norms or $L_1$-preduals with the Daugavet property.
\end{abstract}

\section{Introduction}

According to \cite[Remark~II.5.2]{god}, the norm of a Banach space $X$ is
\emph{octahedral} if, for every finite dimensional subspace $E$ of $X$
and every $\varepsilon > 0$, there exists $y \in S_X$ such that
\begin{equation*}
  \| x + \lambda y \| \ge (1-\varepsilon)(\|x\|+|\lambda|)
  \ \mbox{for every } x \in E\mbox{ and every }\lambda \in \mathbb{R}.
\end{equation*}

Octahedral norms were studied at the end of the eighties in succesive papers \cite{god,gk} because it turns out that such norms characterise the containment of $\ell_1$. Indeed, in \cite[Theorem II.4]{god} it is proved that a Banach space $X$ contains an isomorphic copy of $\ell_1$ if, and only if, $X$ can be equivalently renormed with an octahedral norm.

However, octahedral norms have received much more attention in the recent years because in \cite[Theorem 2.1]{blrjfa14} it is proved that $X$ is octahedral if, and only if, every convex combination of $w^*$-slices of $B_{X^*}$ has diameter two. Since then, octahedral norms and variations of such norms have been studied in many different contexts (see. e.g. \cite{amcrz2022,cll2021,hln2018,ll2019,loru2021}).

One of the areas where octahedrality has been intensively studied is in spaces of operators, that is, it has been analysed when the space of bounded operators $L(X,Y)$ between two Banach spaces $X$ and $Y$ is octahedral. The motivation for this interest comes from \cite[Question (b)]{aln13}, where the authors asked when the projective tensor product $X\pten Y$ satisfies that all the convex combination of slices of its unit ball have diameter two. Thanks to the duality $(X\pten Y)^*=L(X,Y^*)$ and the above mentioned \cite[Theorem 2.1]{blrjfa14}, the above question is equivalent to determining when the norm of the space of operators is  octahedral.

In \cite[Theorem 3.5]{blrope} it is proved that if $Y^*$ and $X$ are octahedral then $H$ is octahedral for any subspace $H\subseteq L(Y,X)$ containing finite-rank operators $F(Y,X)$. More examples of octahedral spaces of operators were given in \cite{hlp2}. It was shown, however, that the above is not the case if we remove octahedrality on $Y^*$, and in fact octahedrality of $L(Y,X)$ is connected with finite-representability of $Y$ in $X$. Indeed, in \cite[Lemma 3.7]{llr2} it is proved that if some subspace $H$ of $L(Y,X)$ is octahedral and $Y$ is uniformly convex then $Y$ is finitely representable in $X$.

The connection between finite-representability and octahedrality of spaces of operators have shown to be much deeper. Indeed, a kind of converse is established in \cite[Theorem 3.2]{laru2020} where it is proved that if $X$ is a Banach space which is finitely representable in $\ell_1$ and with the metric approximation property, then $L(X,Y)$ is octahedral if $Y$ is octahedral.

In this note we will focus on the following problem: which Banach spaces $X$ satisfies that $L(Y,X)$ is octahedral for every Banach space $Y$? We will refer to these spaces as \textit{universally octahedral} (see Definition \ref{defi:uniocta}). Observe that, in order to solve a problem about octahedrality of spaces of vector-valued Lipschitz functions, it is proved in \cite[Theorem 3.1]{laru2020} that $\Lip(M)$, the space of Lipschitz functions over $M$, is universally octahedral whenever $\Lip(M)$ is octahedral.

Anyway, in view of \cite[Lemma 3.7]{llr2} one should think that if $X$ is universally octahedral then it should be an octahedral space such that every uniformly convex Banach space is finitely representable in it. This intuition is confirmed in Lemma \ref{lemma:uconvexo}, where we observe that a necessary condition for universal octahedrality is that, roughly speaking, given any finite dimensional subspace $Z$ of $X$ and any $\varepsilon>0$, then any finite dimensional uniformly convex Banach space can be $(1+\varepsilon)$-embedded in the space of ``$\varepsilon$-orthogonal vectors to $Z$''. In a further step, making use of approximations in Banach-Mazur distance, we obtain in Theorem \ref{theo:condinece} that we can replace in the above statement uniformly convex Banach spaces with $\ell_\infty^n$ for every $n$. More precisely, we prove that if $X$ is universally octahedral then, given any finite dimensional subspace $Z$ of $X$, any $n\in\mathbb N$ and any $\varepsilon>0$, we can find a norm-one operator $\Phi:\ell_\infty^n\longrightarrow X$ such that
$$\Vert z+\Phi(y)\Vert\geq (1-\varepsilon)(\Vert z\Vert+\Vert y\Vert)$$
holds for every $z\in Z$ and every $y\in \ell_\infty$.

The converse, making use of the finite-representability of every Banach space in $c_0$ together with the celebrated characterisation of $L_1$-preduals due to J. Lindenstrauss \cite[Theorem 6.1]{linds}, is proved in Theorem \ref{theo:condisufi}. As a consequence we obtain, in Theorem \ref{theo:carauniocta}, that a Banach space $X$ is universally octahedral if, and only if, $L(\ell_p^n,X)$ is octahedral for every $1\leq p\leq \infty$ and every $n\in\mathbb N$, which is in turn equivalent to the condition that, given any finite dimensional subspace $Z$ of $X$, any $n\in\mathbb N$ and any $\varepsilon>0$, we can find a norm-one operator $\Phi:\ell_\infty^n\longrightarrow X$ such that
$$\Vert z+\Phi(y)\Vert\geq (1-\varepsilon)(\Vert z\Vert+\Vert y\Vert)$$
holds for every $z\in Z$ and every $y\in \ell_\infty^n$.

Observe that the above condition is strictly stronger than the mere finite-representability of $\ell_\infty$ in $X$. Indeed, in Example \ref{exam:nounivoctalinf} we construct an example of a octahedral Banach space which contains an isomorphic copy of $\ell_\infty$ but failing the universal octahedrality. This shows that, in order to obtain universal octahedrality, the requierement that the copies of $\ell_\infty^n$ can be found in the orthogonal part of any finite dimensional subspace can not be relaxed.

Another relevant example is given in Example \ref{example:uninoc0octa}, where it is shown that a universally octahedral space does not have to contain $c_0$ isomorphically.

In Section \ref{section:examples} we aim to find new examples of Banach spaces $X$ which are universally octahedral. We begin by observing that a sufficient condition for universal octahedrality of a $X$ is the following: for every finite dimensional subspace $Z$ of $X$ and every $\varepsilon>0$, there exists a subspace $Y$ of $X$ which is isometrically isomorphic to $c_0$ and such that
$$\Vert z+y\Vert\geq (1-\varepsilon)(\Vert z\Vert+\Vert y\Vert)$$
holds for every $z\in Z$ and every $y\in Y$ (we define this property in Definition \ref{defi:c0octa} as \textit{$c_0$-octahedral}). The reason to introduce this definition is double. The first one is to recover the technique followed in \cite[Theorem 3.1]{laru2020}, where it is proved that $\Lip(M)$ is universally octahedral when it is octahedral, but whose proof is based on \cite[Lemma 3.3]{laru2020}, where it is preciselly proved that $\Lip(M)$ is $c_0$-octahedral. On the other hand, in spite of the fact that Example \ref{example:uninoc0octa} shows that universal octahedrality does not imply $c_0$-octahedrality, there is a strong connection through ultrapower spaces. In fact, in Proposition \ref{prop:ultrapowers} it is proved that $X$ is universally octahedral if, and only if, $X_\mathcal U$ is $c_0$-octahedral for every free ultrafilter $\mathcal U$ over $\mathbb N$. We end the paper with Theorem \ref{theo:L1preduals}, where we prove that every $L_1$-predual which is octahedral is indeed universally octahedral, using recent tools developed in \cite{mr2022}.

\section{Notation and preliminary results}

We will consider real Banach spaces. Given a Banach space $X$, we will denote the closed unit ball and the unit sphere of $X$ by $B_X$ and $S_X$ respectively. We will also denote by $X^*$ the topological dual of $X$. Given two Banach spaces $X$ and $Y$ denote by $L(X,Y)$ (respectively $F(X,Y)$) the space of linear bounded operators (respectively the finite-rank operators) from $X$ to $Y$. 

According to \cite[Definition 11.1.1]{alka}, given two Banach spaces $X$ and $Y$, we say that \textit{$X$ is finitely representable in $Y$} if, given any finite dimensional subspace $E$ of $X$ and any $\varepsilon>0$, there exist a subspace $F$ of $Y$  and a linear continuous bijection $T:E\longrightarrow F$ such that $\Vert T\Vert \Vert T^{-1}\Vert\leq 1+\varepsilon$. This notion encodes the idea that $Y$ contains all the finite dimensional structure of $X$. Observe that every Banach space is finitely-representable in $c_0$ \cite[Example 11.1.2]{alka}. Moreover, as a consequence of the Principle of Local Reflexivity (c.f. e.g. \cite[Lemma 9.15]{checos}), for every Banach space $X$ it follows that $X^{**}$ is finitely representable in $X$. We refer the interested reader to \cite[Chapter 11]{alka} and references therein for background about finite representability of Banach spaces. 

Let us include here, for easy reference, the following lemma, which is extracted from \cite[Lemma 11.1.11]{alka}.

\begin{lemma}\label{lema:caraepsisometrynets}
Let $E$ be a finite dimensional Banach space and let $\{x_j: 1\leq j\leq N\}\subseteq S_X$ be an $\varepsilon$-net of $S_E$. Let $T:E\longrightarrow X$ be a linear mapping such that $(1-\varepsilon)\leq \Vert T(x_j)\Vert\leq (1+\varepsilon)$ holds for every $1\leq j\leq N$. Then, for every $e\in E$, we have
$$\left(\frac{1-3\varepsilon}{1-\varepsilon} \right)\Vert e\Vert\leq \Vert T(e)\Vert\leq \left(\frac{1+\varepsilon}{1-\varepsilon} \right)\Vert e\Vert.$$
\end{lemma}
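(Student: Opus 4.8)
The plan is to exploit homogeneity to reduce both inequalities to the unit sphere $S_E$, and to control the error committed when one replaces an arbitrary point of $S_E$ by its nearest net point. Since $E$ is finite dimensional the linear map $T$ is automatically continuous, so the quantity
$$M:=\sup_{x\in S_E}\Vert T(x)\Vert$$
is finite; equivalently $\Vert T(e)\Vert\leq M\Vert e\Vert$ for every $e\in E$. The whole argument rests on first bounding $M$ from above by $\frac{1+\varepsilon}{1-\varepsilon}$ and then feeding that bound into the lower estimate.

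For the upper bound I would fix $x\in S_E$ and pick, by the net hypothesis, an index $j$ with $\Vert x-x_j\Vert\leq\varepsilon$. The triangle inequality together with $\Vert T(x_j)\Vert\leq 1+\varepsilon$ and the defining property of $M$ gives
$$\Vert T(x)\Vert\leq\Vert T(x_j)\Vert+\Vert T(x-x_j)\Vert\leq(1+\varepsilon)+M\varepsilon.$$
Taking the supremum over $x\in S_E$ on the left-hand side yields the self-referential inequality $M\leq(1+\varepsilon)+M\varepsilon$, and since $M<\infty$ this rearranges to $M\leq\frac{1+\varepsilon}{1-\varepsilon}$. By homogeneity this is precisely the claimed upper bound for every $e\in E$.

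For the lower bound I would again fix $x\in S_E$, choose $j$ with $\Vert x-x_j\Vert\leq\varepsilon$, and estimate in the opposite direction:
$$\Vert T(x)\Vert\geq\Vert T(x_j)\Vert-\Vert T(x-x_j)\Vert\geq(1-\varepsilon)-M\varepsilon.$$
Substituting the bound $M\leq\frac{1+\varepsilon}{1-\varepsilon}$ obtained in the previous step and simplifying the fraction gives $\Vert T(x)\Vert\geq\frac{(1-\varepsilon)^2-\varepsilon(1+\varepsilon)}{1-\varepsilon}=\frac{1-3\varepsilon}{1-\varepsilon}$, and homogeneity promotes this to the full lower bound for arbitrary $e\in E$.

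The argument is entirely elementary, so I do not expect a genuine obstacle; the only point requiring care — and the sole place where finite dimensionality of $E$ is used — is the finiteness of $M$, which is what legitimises the rearrangement of the self-referential inequality in the upper-bound step. I would also remark that the statement is only informative when $\varepsilon<\frac{1}{3}$, so that the lower constant is strictly positive and $T$ is in particular injective, which is the regime in which the lemma is applied to produce $(1+\varepsilon)$-embeddings.
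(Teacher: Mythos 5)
Your proof is correct and is the standard bootstrap argument for this lemma (the paper does not reprove it but cites Albiac--Kalton, whose proof proceeds exactly this way): bound $M=\sup_{x\in S_E}\Vert T(x)\Vert$ via the self-referential inequality $M\leq(1+\varepsilon)+M\varepsilon$, then feed $M\leq\frac{1+\varepsilon}{1-\varepsilon}$ into the reverse triangle inequality to get the lower bound. No issues.
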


Given a sequence of Banach spaces $\{X_n:n\in\mathbb N\}$ we denote 
$$\ell_\infty(\mathbb N,X_n):=\left\{f\colon \mathbb N\longrightarrow \prod\limits_{n\in \mathbb N} X_n: f(n)\in X_n\ \forall n\text{ and }\sup_{n\in \mathbb N}\Vert f(n)\Vert<\infty\right\}.$$
Given a non-principal ultrafilter $\mathcal U$ over $\mathbb N$, consider $c_{0,\mathcal U}(\mathbb N,X_n):=\{f\in \ell_\infty(\mathbb N,X_n): \lim_\mathcal U \Vert f(n)\Vert=0\}$. The \textit{ultrapower of $\{X_n:n\in\mathbb N\}$ with respect to $\mathcal U$} is
the Banach space
$$(X_n)_\mathcal U:=\ell_\infty(\mathbb N,X_n)/c_{0,\mathcal U}(\mathbb N,X_n).$$
We will naturally identify a bounded function $f\colon\mathbb N\longrightarrow \prod\limits_{n\in \mathbb N} X_n$ with the element $(f(n))_{n\in\mathbb N}$. In this way, we denote by $(x_n)_\mathcal U$ or simply by $(x_n)$, if no confusion is possible, the coset in $(X_n)_\mathcal U$ given by $(x_n)_{n\in \mathbb N}+c_{0,\mathcal U}(\mathbb N,(X_n))$.

From the definition of the quotient norm, it is not difficult to prove that $\Vert (x_n)_\mathcal U\Vert=\lim_\mathcal U \Vert x_n\Vert$ holds for every $(x_n)\in (X_n)_\mathcal U$.

%

When $X_n=X$ holds for every $n\in\mathbb N$, the definition of the norm on $X_\mathcal U$ yields a canonical inclusion $j:X\longrightarrow X_\mathcal U$ given by the equation
$$j(x):=(x)_\mathcal U.$$
This inclusion is an into linear isometry, so $X$ can be isometrically embedded in $X_\mathcal U$. Moreover, $X_\mathcal U$ is finitely representable in $X$ \cite[Proposition 11.1.12]{alka}.

Given a Banach space $X$ we say that $X$ is an \textit{$L_1$-predual} if $X^*=L_1(\mu)$ isometrically for some measure $\mu$. Let us include here for easy reference in the text the following result.

\begin{theorem}\label{theo:lindenstrauss}\cite[Theorem 6.1]{linds}
Let $X$ be a Banach space. The following assertions are equivalent:
\begin{enumerate}
\item $X$ is an $L_1$-predual.
\item Every compact operator $T:Y\longrightarrow X$ has, for every $\varepsilon>0$ and every Banach space $Z$ containing $Y$, an extension $\hat T:Z\longrightarrow X$ such that $\Vert \hat T\Vert\leq (1+\varepsilon)\Vert T\Vert$. 
\end{enumerate}
\end{theorem}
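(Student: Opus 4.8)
The plan is to make the finite ball-intersection geometry of $X$ the hub through which both conditions factor. Say that $X$ has the \emph{approximate binary intersection property} (ABIP) if for every finite family of closed balls $B(x_1,r_1),\dots,B(x_n,r_n)$ that intersect pairwise, i.e. $\Vert x_i-x_j\Vert\leq r_i+r_j$ for all $i,j$, and every $\varepsilon>0$, there exists $x\in X$ with $\Vert x-x_i\Vert\leq r_i+\varepsilon$ for all $i$. I would prove the theorem by establishing $(1)\Leftrightarrow(\text{ABIP})\Leftrightarrow(2)$.

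For $(1)\Leftrightarrow(\text{ABIP})$ I would use duality together with Nachbin's theorem. Since $X^*=L_1(\mu)$ is equivalent to $X^{**}=L_\infty(\mu)$, and $L_\infty(\mu)$ is an order-complete $AM$-space with unit, condition (1) is equivalent to $X^{**}$ being $1$-injective, hence---by the Nachbin--Goodner--Kelley theorem---to $X^{**}$ having the exact binary intersection property for arbitrary families of balls. The passage between this exact property in $X^{**}$ and its $\varepsilon$-approximate version in $X$ is where the principle of local reflexivity recalled above enters: pairwise intersecting balls in $X$ yield pairwise intersecting balls in $X^{**}$, whose common point can be pulled back into $X$ with an $\varepsilon$-loss on a finite dimensional subspace containing the relevant centres.

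For $(\text{ABIP})\Leftrightarrow(2)$, the forward implication is proved by extending one dimension at a time. To extend $T$ across a single new vector $z_0$, choosing the value $\hat T(z_0)=x$ amounts to requiring $x$ to lie in a family of balls of $X$ indexed by $Y$; compactness of $T$ lets me replace this family by a finite, pairwise intersecting subfamily up to $\varepsilon$ (using a finite net of $\overline{T(B_Y)}$), and ABIP supplies the value. A transfinite induction, with the usual bookkeeping distributing the $\varepsilon$'s so as to keep the norm under $(1+\varepsilon)\Vert T\Vert$, then handles an arbitrary superspace $Z$; here one checks that each one-step extension stays compact, since its image of the ball lies in a sum of two relatively compact sets. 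Conversely, given pairwise intersecting balls in $X$, I would realise their centres as the image of a finite dimensional space $Y$ under a finite-rank (hence compact) operator $T$, embed $Y$ into a one-larger space $Z$ whose norm is designed so that any extension $\hat T$ must send a distinguished vector to a near-common point of the balls, and read off ABIP from condition (2).

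The genuinely delicate steps, and the place where the hypotheses are used in full, are the two transfer arguments: the local-reflexivity descent from the exact binary intersection property of $X^{**}$ to the approximate one in $X$, and the one-dimensional extension step, where compactness of $T$ is indispensable for reducing infinitely many ball constraints to a finite pairwise-intersecting family. Indeed, for a non-compact operator such as the identity on an infinite dimensional $X$ the $(1+\varepsilon)$-extension property fails in general, which is exactly why the statement is confined to compact operators.
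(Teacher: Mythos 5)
This statement is not proved in the paper at all: it is quoted verbatim from Lindenstrauss's memoir \cite[Theorem~6.1]{linds} purely for later reference, so the only meaningful comparison is with the classical proof. Your overall architecture, namely $(1)\Leftrightarrow(\text{ABIP})\Leftrightarrow(2)$ with the approximate finite pairwise ball-intersection property as the hub, is indeed the architecture of the classical argument, and the two directions you do sketch in detail are essentially right: $(1)\Rightarrow(\text{ABIP})$ via $X^{**}=L_\infty(\mu)$, Nachbin--Goodner--Kelley and local reflexivity, and $(\text{ABIP})\Rightarrow(2)$ by one-dimensional extension steps in which compactness of $T$ reduces the continuum of ball constraints on the value $\hat T(z_0)$ to a finite pairwise-intersecting family; your closing remark that the identity on $c_0\subseteq\ell_\infty$ shows compactness cannot be dropped is also correct.

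There is, however, a genuine gap: nowhere in your plan is the implication $(\text{ABIP})\Rightarrow(1)$ proved, and this is the deep half of the theorem. You present $(1)\Leftrightarrow(\text{ABIP})$ as if both directions followed from ``Nachbin plus local reflexivity,'' but those tools only run downhill, from the exact binary intersection property of $X^{**}$ to the approximate one in $X$. Going uphill --- from an $\varepsilon$-approximate property of finitely many balls with centres in $X$ to the statement that $X^*$ is isometrically an $L_1(\mu)$ space (equivalently, that $X^{**}$ is $1$-injective, where now the balls have centres in $X^{**}\setminus X$, which local reflexivity does not fix) --- is the core of Lindenstrauss's Chapter~6 and requires the finite-dimensional structure theory of $L_1$-spaces: one must show that the intersection property forces every finite subset of $X^*$ to be almost contained in an almost isometric copy of $\ell_1^n$ in a compatible way, and then pass to the limit. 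Without this, your scheme proves $(1)\Rightarrow(\text{ABIP})\Rightarrow(2)$ and $(2)\Rightarrow(\text{ABIP})$, but never returns to $(1)$, so the stated equivalence is not established. A secondary, fixable but real, problem is the transfinite induction in $(\text{ABIP})\Rightarrow(2)$: for nonseparable $Z/Y$ you cannot distribute strictly positive losses $\varepsilon_\alpha$ over uncountably many one-dimensional steps and keep $\prod(1+\varepsilon_\alpha)$ bounded; the classical treatment avoids this by first extending to an $X^{**}$-valued operator with no loss (using $1$-injectivity of $X^{**}$) and only then pushing the compact image back into $X$ with a single $\varepsilon$.
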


Strongly related to $L_1(\mu)$-spaces are the $L$-summands. A projection $P\colon X\longrightarrow X$ on a Banach space $X$ is said to be an \emph{$L$-projection} if $\|x\|=\|Px\|+\|x-Px\|$ for every $x\in X$. The range of an $L$-projection is called an \emph{$L$-summand}. We refer the reader to \cite{hww} for a vast background about $L$-summands.

\section{Characterisation of universally octahedral spaces}\label{section:charauniocta}

Let us start with the main definition of the paper.

\begin{definition}\label{defi:uniocta}
Let $X$ be a Banach space. We will say that $X$ is \textit{universally octahedral} if $L(Y,X)$ is octahedral for every non-zero Banach space $Y$.
\end{definition}

The aim of this section is to provide a characterisation of universally octahedral Banach spaces. In order to do so, let us start with the following preliminary lemma, which is a strengthening of \cite[Lemma 3.7]{llr2}. Recall that a Banach space $X$ is said to be \textit{uniformly convex} if, for every $\varepsilon>0$, there exists $\delta(\varepsilon)>0$ such that 
$$\left.\begin{array}{c}
x,y\in B_X\\
\Vert x+y\Vert>2-\delta(\varepsilon)
\end{array} \right\}\Rightarrow \Vert x-y\Vert<\varepsilon.$$
Examples of uniformly convex Banach spaces are $L_p(\mu)$ for $1<p<\infty$ thanks to Clarkson inequality (see \cite[Chapter 9]{checos} for background about uniform convexity).

\begin{lemma}\label{lemma:uconvexo}
Let $X$ be a Banach space and $Y$ be a finite dimensional uniformly convex Banach space. Assume that $L(Y,X)$ is octahedral. Then, for every $\varepsilon>0$ and for every finite dimensional subspace $Z$ of $X$, there exists an element $T\in B_{L(Y,X)}$ such that
$$\Vert z+T(y)\Vert\geq (1-\varepsilon)^2(\Vert z\Vert+\Vert y\Vert)$$
holds for every $y\in Y$ and every $z\in Z$.
\end{lemma}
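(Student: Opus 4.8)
The plan is to build the operator $T$ directly from the octahedrality of $L(Y,X)$, and to use the uniform convexity of $Y$ to convert the \emph{operator-norm} estimate that octahedrality provides into a \emph{pointwise} estimate along prescribed directions of $S_Y$. First I would fix $\varepsilon>0$ and $Z$, introduce an auxiliary parameter $\eta>0$ (to be sent to $0$ at the end), and choose a finite $\delta$-net $\{y_1,\dots,y_N\}$ of $S_Y$ together with norming functionals $y_i^*\in S_{Y^*}$, so that $y_i^*(y_i)=1$. Since $\dim Z<\infty$, the set $E:=\spann\{y_i^*\otimes z: 1\le i\le N,\ z\in Z\}$ is a finite dimensional subspace of $L(Y,X)$ with $\Vert y_i^*\otimes z\Vert=\Vert z\Vert$. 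Applying octahedrality of $L(Y,X)$ to $E$ and $\eta$ yields $T\in S_{L(Y,X)}\subseteq B_{L(Y,X)}$ with $\Vert S+\lambda T\Vert\ge(1-\eta)(\Vert S\Vert+\abs{\lambda})$ for all $S\in E$ and all $\lambda\in\R$; this $T$ is the candidate.

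The heart of the argument is a localisation step. Fix $i$ and $z\in Z$, put $r=\Vert z\Vert$, and apply the octahedral inequality to $S=y_i^*\otimes z$ with the \emph{balanced} choice $\lambda=r$: as $\Vert S\Vert=r$ this gives $\sup_{w\in B_Y}\Vert y_i^*(w)z+rT(w)\Vert\ge 2r(1-\eta)$. Let $w_1\in S_Y$ attain the supremum (possible since $B_Y$ is compact and the map is convex, hence maximised at an extreme point of $B_Y$). Estimating the attained value from above by $\abs{y_i^*(w_1)}\,r+r\Vert T(w_1)\Vert$ forces both $\abs{y_i^*(w_1)}\ge 1-2\eta$ and $\Vert T(w_1)\Vert\ge 1-2\eta$; this is exactly where the balance $\lambda=\Vert z\Vert$ is decisive, since it makes the two competing terms saturate simultaneously irrespective of the size of $z$. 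Now uniform convexity enters: since $\abs{y_i^*(w_1)}\ge 1-2\eta$ and $y_i^*(y_i)=1$, one of $\Vert w_1+y_i\Vert$, $\Vert w_1-y_i\Vert$ is at least $2-2\eta$, whence $w_1$ lies within the modulus-of-convexity distance of $y_i$ or of $-y_i$. Evaluating the map $w\mapsto\Vert y_i^*(w)z+rT(w)\Vert$, which is unchanged under $w\mapsto -w$, at $y_i$ in place of $w_1$ then costs only this localisation error and produces the anchor estimate $\Vert z+r\,T(y_i)\Vert\ge 2r(1-\eta')$ with $\eta'\to 0$ as $\eta\to 0$.

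Finally I would upgrade the anchor to the full inequality and remove the net. By joint homogeneity in $(z,y)$ it suffices to treat $\Vert y\Vert=1$. If $\Vert z\Vert\ge 1$, the same localisation with $\lambda=1\le\Vert z\Vert$ already delivers $\Vert z+T(y_i)\Vert\ge(1-\eta')(\Vert z\Vert+1)$; if $\Vert z\Vert<1$, I would feed the anchor into the convex function $s\mapsto\Vert z+s\,T(y_i)\Vert$, whose values at $s=0$ and $s=\Vert z\Vert$ give, by convexity, the bound $\Vert z+T(y_i)\Vert\ge(\Vert z\Vert+1)-2\eta'$ at $s=1$. Passing from $y_i$ to an arbitrary $y\in S_Y$ with $\Vert y-y_i\Vert<\delta$ costs at most $\delta$, since $T$ is $1$-Lipschitz, and choosing $\eta$ and $\delta$ small enough absorbs all accumulated losses into the factor $(1-\varepsilon)^2$. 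The step I expect to be the main obstacle is the localisation: a priori octahedrality controls only the supremum $\sup_{w}\Vert\cdot\Vert$ defining the operator norm, and without the balanced choice of $\lambda$ together with the uniform convexity of $Y$ there is no reason for this supremum to be nearly attained at the prescribed direction $y_i$, which is precisely what is needed to extract a pointwise $\ell_1$-type estimate between $z$ and $T(y)$.
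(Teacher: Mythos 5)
Your proposal is correct and follows essentially the same route as the paper's proof: both apply octahedrality to rank-one operators $y_i^*\otimes z$ built from norming functionals of a net of $S_Y$, then use uniform convexity to show that the vector nearly attaining the operator norm must lie close to $\pm y_i$ (so the estimate can be transferred to $y_i$ itself), and finally upgrade to the full $(1-\varepsilon)^2$ inequality by a convexity/homogeneity argument. The only cosmetic differences are that the paper also takes a finite net of $S_Z$ and applies the ``finitely many points'' form of octahedrality, whereas you apply the subspace form to $\spann\{y_i^*\otimes z\}$ with the balanced choice $\lambda=\Vert z\Vert$; both are valid.
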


Observe that the mapping $T$ is a $(1+\varepsilon)$-isometry (just take $z=0$) and that $X$ is octahedral.

\begin{proof}
Since $Y$ is uniformly convex there exists a mapping $\delta:\mathbb R^+\longrightarrow \mathbb R^+$ such that $\lim_{\varepsilon\rightarrow 0}\delta(\varepsilon)=0$ and with the property that, given $\eta>0$, if $x,y\in B_Y$ satisfy $\Vert x+y\Vert>2-\delta(\eta)$ then $\Vert x-y\Vert<\eta$.

Take $\varepsilon>0$ and  $\eta>0$ small enough such that $\delta(\eta)+4\eta<\varepsilon$. Pick a finite dimensional subspace $Z$ of $X$. Take $\{y_1,\ldots, y_n\}$ a $\eta$-net of $S_Y$ and take $\{z_1,\ldots, z_p\}$ a $\eta$-net of $S_Z$. For every $i\in\{1,\ldots, n\}$ take $f_i\in S_{Y^*}$ such that $f_i(y_i)=1$.

Define $T_{ij}:=f_i\otimes z_j\in L(Y,X)$ by $T_{ij}(x):=f_i(x)z_j$, and note that $T_{ij}$ is a norm-one element. By the assumption that $L(Y,X)$ is octahedral we can find an operator $T\in S_{L(Y,X)}$ such that
$$\Vert T_{ij}+T\Vert>2-\delta(\eta)$$
holds for every $1\leq i\leq n$ and $1\leq j\leq p$. 

Fix $1\leq i\leq n$ and $1\leq j\leq p$. By the definition of the operator norm we can find $y_{ij}\in S_Y$ such that $2-\delta(\eta)<\Vert T_{ij}(y_{ij})+T(y_{ij})\Vert$. By the Hahn-Banach theorem we can find $x_{ij}^*\in S_{X^*}$ such that
$$2-\delta(\eta)<x_{ij}^*(f_i(y_{ij})z_j+T(y_{ij})).$$
Up to a change of sign we can assume with no loss of generality that $f_i(y_{ij})\geq 0$. Since all the elements in the above inequality are norm-one elements we get that $f_i(y_{ij})>1-\delta(\eta)$. Moreover, since $f_i(y_i)=1$ we get that $\Vert y_i+y_{ij}\Vert>2-\delta(\eta)$, and the uniform convexity implies that $\Vert y_i-y_{ij}\Vert<\eta$.
This implies that
\[\begin{split}2-\delta(\eta)<x_{ij}^*(f_i(y_{ij})z_j+T(y_{ij})) & \leq x_{ij}^*(f_i(y_{i})z_j+T(y_{i}))+2\Vert y_i-y_{ij}\Vert\\
& \leq \Vert z_j+T(y_i)\Vert +2\eta.
\end{split}\]
Since $\{y_1,\ldots, y_n\}$ is a $\eta$-net in $S_Y$ and $\{z_1,\ldots, z_p\}$ is a $\eta$-net in $S_Z$ we conclude that
$$\Vert z+T(y)\Vert>2-\delta(\eta)-4\eta>2-\varepsilon$$
holds for every $z\in S_Z$ and every $y\in S_Y$.

Let us conclude from here the desired result. To this end, take arbitrary $z\in S_Z$ and $y\in S_Y$, and take $t_1,t_2\in [0,1]$ such that $t_1+t_2=1$. Let us estimate $\Vert t_1 z+t_2 T(y)\Vert$. Assume with no loss of generality that $t_1\geq t_2$ (the other case runs similar). Then
\[\begin{split}
\Vert t_1z+t_2T(y)\Vert& =\Vert t_1(z+T(y))+(t_2-t_1)T(y)\Vert\geq t_1 \Vert z+T(y)\Vert-\vert t_2-t_1\vert\Vert T(y)\Vert\\
& \geq t_1(2-\varepsilon)+t_2-t_1=t_1+t_2-t_1\varepsilon\geq 1-\varepsilon.
\end{split}
\]
Observe that this proves in particular that $\Vert T(y)\Vert\geq 1-\varepsilon$ holds for every $y\in S_Y$ and, in consequence, $\Vert T(y)\Vert \geq (1-\varepsilon)\Vert y\Vert$ for every $y\neq 0$.

Now, given $z\in Z\setminus\{0\}$ and $y\in Y\setminus\{0\}$, we get that
$$\frac{\Vert z+T(y)\Vert}{\Vert z\Vert+\Vert T(y)\Vert }=\left\Vert\frac{\Vert z\Vert}{\Vert z\Vert+\Vert T(y)\Vert}\frac{z}{\Vert z\Vert}+\frac{\Vert T(y)\Vert}{\Vert z\Vert+\Vert T(y)\Vert}\frac{T(y)}{\Vert T(y)\Vert} \right\Vert>1-\varepsilon,$$
from where $\Vert z+ T(y)\Vert>(1-\varepsilon)(\Vert z\Vert+\Vert T(y)\Vert)>(1-\varepsilon)(\Vert z\Vert+(1-\varepsilon)\Vert y\Vert))>(1-\varepsilon)^2(\Vert z\Vert+\Vert y\Vert)$, and the lemma is proved.
\end{proof}

\begin{remark}\label{remark:homogeneity}
Observe that, from the last part of the above proof, the following holds true: Given two Banach spaces $X$ and $Y$ with $Y$ finite dimensional, the following assertions are equivalent:
\begin{enumerate}
\item For every finite dimensional subspace $Z$ of $X$ and every $\varepsilon>0$ we can find a norm-one operator $T:Y\longrightarrow X$ such that
$$\Vert z+T(y)\Vert\geq (1-\varepsilon)(\Vert z\Vert+\Vert y\Vert)$$
holds for every $y\in Y$ and every $z\in Z$.
\item For every finite subsets $\{z_1,\ldots, z_n\}\subseteq S_X$ and $\{y_1,\ldots, y_m\}\subseteq S_Y$ and every $\varepsilon>0$ there exists a norm-one operator $T:Y\longrightarrow X$ such that
$$\Vert z_i+T(y_j)\Vert>2-\varepsilon$$
holds for every $1\leq i\leq n$ and $1\leq j\leq m$.
\end{enumerate}
We will use this remark throughout the text.
\end{remark}

Now we are ready to prove the following necessary condition for a Banach space being universally octahedral.

\begin{theorem}\label{theo:condinece}
Let $X$ be a universally octahedral Banach space. Then, for every $\varepsilon>0$, for every finite dimensional subspace $Z$ of $X$ and for every $n\in\mathbb N$, there exists an operator $T:\ell_\infty^n\longrightarrow X$ such that $\Vert T\Vert\leq 1$ and such that
$$\Vert z+T(y)\Vert\geq (1-\varepsilon)(\Vert z\Vert+\Vert y\Vert)$$
holds for every $y\in \ell_\infty^n$ and every $z\in Z$.
\end{theorem}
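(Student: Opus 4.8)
The plan is to transfer the conclusion of Lemma~\ref{lemma:uconvexo} from a uniformly convex domain to $\ell_\infty^n$ by approximating $\ell_\infty^n$ in the Banach--Mazur distance by the uniformly convex spaces $\ell_p^n$. Recall that for $1<p<\infty$ the space $\ell_p^n$ is finite dimensional and uniformly convex (by Clarkson's inequality, as noted before Lemma~\ref{lemma:uconvexo}), and that on $\mathbb{R}^n$ one has $\|y\|_\infty\leq\|y\|_p\leq n^{1/p}\|y\|_\infty$; in particular the Banach--Mazur distance between $\ell_p^n$ and $\ell_\infty^n$ is at most $n^{1/p}$, which tends to $1$ as $p\to\infty$. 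This is the approximation that will allow us to replace the uniformly convex domain by $\ell_\infty^n$ at the cost of an arbitrarily small loss.

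First I would fix $\varepsilon>0$, the finite dimensional subspace $Z$, and $n\in\mathbb{N}$, and then choose auxiliary parameters $\varepsilon'>0$ and a real exponent $p$ with $1<p<\infty$ large enough that $(1-\varepsilon')^2-(n^{1/p}-1)\geq 1-\varepsilon$; this is possible since the left-hand side converges to $1$ as $\varepsilon'\to 0$ and $p\to\infty$. Since $X$ is universally octahedral, $L(\ell_p^n,X)$ is octahedral, so Lemma~\ref{lemma:uconvexo}, applied with $Y=\ell_p^n$ and the parameter $\varepsilon'$, provides an operator $S\in B_{L(\ell_p^n,X)}$ such that
$$\|z+S(w)\|\geq(1-\varepsilon')^2(\|z\|+\|w\|_p)$$
holds for every $w\in\ell_p^n$ and every $z\in Z$.

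Then I would define $T\colon\ell_\infty^n\longrightarrow X$ by $T(y):=n^{-1/p}S(y)$, identifying $\ell_\infty^n$ and $\ell_p^n$ with the same underlying vector space $\mathbb{R}^n$. The norm bound $\|T\|\leq 1$ follows at once from $\|S(y)\|\leq\|y\|_p\leq n^{1/p}\|y\|_\infty$. For the lower estimate I would combine the triangle inequality
$$\|z+T(y)\|\geq\|z+S(y)\|-(1-n^{-1/p})\|S(y)\|$$
with the inequality coming from Lemma~\ref{lemma:uconvexo}, using $\|y\|_p\geq\|y\|_\infty$ in the main term (so that $\|z+S(y)\|\geq(1-\varepsilon')^2(\|z\|+\|y\|_\infty)$) and $\|y\|_p\leq n^{1/p}\|y\|_\infty$ to control the error, via $(1-n^{-1/p})\|S(y)\|\leq(n^{1/p}-1)\|y\|_\infty$. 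Bounding $\|y\|_\infty\leq\|z\|+\|y\|_\infty$ in the error term yields $\|z+T(y)\|\geq\bigl((1-\varepsilon')^2-(n^{1/p}-1)\bigr)(\|z\|+\|y\|_\infty)\geq(1-\varepsilon)(\|z\|+\|y\|_\infty)$, which is exactly the desired conclusion.

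The only genuine difficulty is the normalization. The operator $S$ viewed on $\ell_\infty^n$ has norm roughly $n^{1/p}>1$, so it is not admissible as it stands, while rescaling it down by $n^{1/p}$ threatens to destroy the $\ell_1$-orthogonality lower bound inherited from Lemma~\ref{lemma:uconvexo}. The resolution, and the point of the argument, is to choose $p$ large enough that the rescaling factor $n^{1/p}$ lies within $\varepsilon$ of $1$, so that the discrepancy between the $\ell_p$ and $\ell_\infty$ norms, and hence the loss incurred by renormalizing, can be absorbed into the final $\varepsilon$.
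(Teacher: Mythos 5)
Your proposal is correct and follows essentially the same route as the paper: both approximate $\ell_\infty^n$ by $\ell_p^n$ for large $p$ via the estimate $\|y\|_\infty\leq\|y\|_p\leq n^{1/p}\|y\|_\infty$, apply Lemma~\ref{lemma:uconvexo} to the uniformly convex space $\ell_p^n$, and compose with the rescaled formal identity. The only cosmetic difference is that the paper absorbs the loss multiplicatively through the scaled identity while you track it as an additive error via the triangle inequality; both bookkeepings work.
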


\begin{proof}
Observe that, given $x\in \mathbb R^n$, we have that $\Vert x\Vert_\infty\leq \Vert x\Vert_p\leq n^\frac{1}{p}\Vert x\Vert_\infty$, so $\ell_\infty^n$ is, for every $\varepsilon>0$, $(1+\varepsilon)$-isometric to a uniformly convex Banach space.

The proof is simple from now. Take $\varepsilon>0$ and $n\in\mathbb N$, and take $p\in\mathbb N$ such that a suitable scalling of the formal identity $\phi:\ell_\infty^n\longrightarrow \ell_p^n$ satisfies 
$$\sqrt{1-\varepsilon}\Vert x\Vert\leq \Vert \phi(x)\Vert\leq \Vert x\Vert\ \forall x\in X.$$
Now set $Z\subseteq X$ be a finite dimensional subspace. Applying Lemma \ref{lemma:uconvexo} we can find a bounded operator $T:\ell_p^n\longrightarrow X$ with $\Vert T\Vert\leq 1$ and such that
$$\Vert z+T(y)\Vert\geq \sqrt{1-\varepsilon}(\Vert z\Vert+\Vert y\Vert)$$
holds for every $z\in Z$ and every $y\in Y$. Now $T\circ\phi$ is the desired operator.
\end{proof}

\begin{remark}\label{remark:condinece} A couple of remarks are pertinent.
\begin{enumerate}
\item In the above proof we have only used that $L(\ell_p^n,X)$ is octahedral for every $n\in\mathbb N$ and every $1<p<\infty$.

\item Observe that this in particular implies that $c_0$ is finitely representable in $X$ \cite[Lemma 11.1.6]{alka}, which implies that every Banach space is finitely representable in $X$. In particular $\ell_\infty$ is finitely representable in $X$, which implies that $X$ has a trivial cotype \cite[Theorem 11.1.14]{alka}.
\end{enumerate}
\end{remark}

Now it is time to prove that the converse holds true.

\begin{theorem}\label{theo:condisufi}
Let $X$ be a Banach space. Assume that, for every $\varepsilon>0$, for every finite dimensional subspace $Z$ of $X$ and for every $n\in\mathbb N$, there exists an operator $T:\ell_\infty^n\longrightarrow X$ such that $\Vert T\Vert\leq 1$ and such that
$$\Vert z+T(y)\Vert\geq (1-\varepsilon)(\Vert z\Vert+\Vert y\Vert)$$
holds for every $y\in \ell_\infty^n$ and every $z\in Z$.

Then, for every Banach space $Y$ and for every subspace $H$ of $L(Y,X)$ containing the finite rank operators, the norm of $H$ is octahedral.
\end{theorem}

\begin{proof}
Let $Y$ be a non-zero Banach space and $H\subseteq L(Y,X)$ as in the hypothesis. In order to prove that $H$ is octahedral pick $T_1,\ldots, T_n\in S_H$ and $\varepsilon>0$, and let us find an element $\Psi\in S_H$ such that $\Vert T_i+\Psi\Vert>2-\varepsilon$ holds for every $1\leq i\leq n$. This is enough by \cite[Proposition 2.1]{hlp}. In order to do so find, for every $i\in\{1,\ldots, n\}$, an element $y_i\in S_Y$ such that $\Vert T_i(y_i)\Vert>1-\varepsilon$. Set $Z:=\spann\{T(y_i): 1\leq i\leq n\}$.

Set also $V:=\spann\{y_1,\ldots, y_n\}\subseteq Y$. Since every Banach space is finitely representable in $c_0$ we can find an operator $\phi:V\longrightarrow c_0$ with $\Vert \phi(y_i)\Vert>1-\varepsilon$ for every $i$ and such that $\Vert \phi\Vert<1$. This operator can be extended by Theorem \ref{theo:lindenstrauss} to an operator $Q:Y\longrightarrow c_0$ which satisfies that $\Vert Q(y_i)\Vert>1-\varepsilon$ for every $i$ and still $\Vert Q\Vert<1$. By the definition of the $c_0$ norm we can find $n$ large enough such that, if we define $P:c_0\longrightarrow \ell_\infty^n$ the natural projection, we get $\Vert P(Q(y_i))\Vert>1-\varepsilon$ for every $i$. 

Now, by the hypothesis, we can find an operator $T:\ell_\infty^n\longrightarrow X$ such that $\Vert T\Vert\leq 1$ and such that
$$\Vert z+T(y)\Vert\geq (1-\varepsilon)(\Vert z\Vert+\Vert y\Vert)$$
holds for every $y\in \ell_\infty^n$ and every $z\in Z$.

Now the desired operator is $\Psi:=T\circ P\circ Q:Y\longrightarrow X$, which belongs to $F(Y,X)\subseteq H$. Observe that $\Vert \Psi\Vert\leq 1$. Moreover, given $1\leq i\leq n$, we get
\[
\begin{split}
\Vert T_i+\Psi\Vert\geq \Vert T_i(y_i)+T(Q(P(y_i)))\Vert & \geq (1-\varepsilon)(\Vert T_i(y_i)\Vert+\Vert Q(P(y_i))\Vert)\\
& \geq (1-\varepsilon)(1-\varepsilon+1-\varepsilon)=2(1-\varepsilon)^2.
\end{split}
\]
Since $\varepsilon$ was arbitrary we conclude the result.
\end{proof}

As a consequence we get the following result.

\begin{theorem}\label{theo:carauniocta}
Let $X$ be a Banach space. The following are equivalent:
\begin{enumerate}
\item For every Banach space $Y$ and every $H\subseteq L(Y,X)$ containing the finite-rank operators, the space $H$ is octahedral.
\item $X$ is universally octahedral.
\item For every finite dimensional Banach space $Y$, the space $L(Y,X)$ is octahedral.
\item For every finite dimensional uniformly convex Banach space $Y$, the space $L(Y,X)$ is octahedral.
\item For every $1<p<\infty$ and every $n\in\mathbb N$ the space $L(\ell_p^n,X)$ is octahedral.
\item For every $\varepsilon>0$, for every finite dimensional subspace $Z$ of $X$ and for every $n\in\mathbb N$, there exists an element $T:\ell_\infty^n\longrightarrow X$ with $\Vert T\Vert\leq 1$ and such that
$$\Vert z+T(y)\Vert\geq (1-\varepsilon)(\Vert z\Vert+\Vert y\Vert)$$
holds for every $y\in \ell_\infty^n$ and every $z\in Z$.
\end{enumerate}
\end{theorem}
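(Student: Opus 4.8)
The plan is to close the cyclic chain of implications $(1)\Rightarrow(2)\Rightarrow(3)\Rightarrow(4)\Rightarrow(5)\Rightarrow(6)\Rightarrow(1)$, taking advantage of the fact that the two genuinely substantial steps have already been isolated as Theorem \ref{theo:condinece} and Theorem \ref{theo:condisufi}. Once the cycle is closed, all six statements are mutually equivalent, so there is nothing further to prove.

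The first four implications are immediate specialisations and require no new work. For $(1)\Rightarrow(2)$ I would simply take $H=L(Y,X)$ itself, which trivially contains $F(Y,X)$; then $(1)$ asserts that $L(Y,X)$ is octahedral for every non-zero $Y$, which is precisely the definition of universal octahedrality. For $(2)\Rightarrow(3)$ it suffices to observe that every non-zero finite dimensional space is in particular a non-zero Banach space, so $L(Y,X)$ is octahedral by hypothesis. For $(3)\Rightarrow(4)$ one notes that a finite dimensional uniformly convex space is a special case of a finite dimensional space. Finally, $(4)\Rightarrow(5)$ follows because, for $1<p<\infty$, the space $\ell_p^n$ is finite dimensional and uniformly convex (Clarkson's inequality, as recalled before Lemma \ref{lemma:uconvexo}), so octahedrality of $L(\ell_p^n,X)$ is contained in $(4)$.

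The implication $(5)\Rightarrow(6)$ is the one subtle link, and here I would invoke Theorem \ref{theo:condinece} together with Remark \ref{remark:condinece}(1). Although Theorem \ref{theo:condinece} is \emph{stated} under the hypothesis that $X$ is universally octahedral, its proof—as recorded in Remark \ref{remark:condinece}(1)—uses only that $L(\ell_p^n,X)$ is octahedral for every $n\in\mathbb N$ and every $1<p<\infty$, which is exactly the content of $(5)$. Thus $(5)$ already supplies, for every $\varepsilon>0$, every finite dimensional $Z\subseteq X$, and every $n\in\mathbb N$, the norm-one operator $T:\ell_\infty^n\longrightarrow X$ with $\Vert z+T(y)\Vert\geq(1-\varepsilon)(\Vert z\Vert+\Vert y\Vert)$ demanded by $(6)$. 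The remaining implication $(6)\Rightarrow(1)$ is nothing but Theorem \ref{theo:condisufi}, applied to an arbitrary non-zero Banach space $Y$ and an arbitrary subspace $H\subseteq L(Y,X)$ containing the finite rank operators; this closes the cycle.

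I do not anticipate a genuine obstacle: the argument is entirely organisational, and the only point requiring care is precisely the use of Remark \ref{remark:condinece}(1) at the step $(5)\Rightarrow(6)$. Without that observation one would obtain only a linear chain of implications running from the strongest condition $(1)$ down to the weakest, whereas the remark is what lets the hypothetically weaker assumption $(5)$ feed back into the construction of Theorem \ref{theo:condinece} and thereby genuinely close the loop.
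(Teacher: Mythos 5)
Your proposal is correct and follows exactly the paper's own argument: the implications $(1)\Rightarrow(2)\Rightarrow(3)\Rightarrow(4)\Rightarrow(5)$ are immediate specialisations, $(5)\Rightarrow(6)$ is obtained by noting (as in Remark \ref{remark:condinece}) that the proof of Theorem \ref{theo:condinece} only uses octahedrality of $L(\ell_p^n,X)$, and $(6)\Rightarrow(1)$ is Theorem \ref{theo:condisufi}. Nothing further is needed.
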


\begin{proof}
(1)$\Rightarrow$(2)$\Rightarrow$(3)$\Rightarrow$(4)$\Rightarrow$(5) are immediate, and (5)$\Rightarrow$(6) follows by Remark \ref{remark:condinece}. Finally, (6)$\Rightarrow$(1) is Theorem \ref{theo:condisufi}.
\end{proof}

Observe that condition (6) requires not only that $\ell_\infty$ is finitely representable in $X$, but also that $\ell_\infty$ must be finitely representable in a part of $X$ which kind of $\ell_1$-orthogonal to $Z$ for any finite-dimensional subspace $Z$ of $X$. This fact will become more clear in the following example.

\begin{example}\label{exam:nounivoctalinf}
Let $X:=\ell_\infty\oplus_1 \ell_1$. $X$ is octahedral \cite[Proposition 3.10]{hlp} and clearly contains $\ell_\infty$ isometrically. However, we claim that $X$ is not universally octahedral.

Indeed, assume by contradiction that $X$ is universally octahedral. Let $Z:=\spann\{(e_1,0)\}\subseteq X=\ell_\infty\oplus_1 \ell_1$. 
Fix $n\in\mathbb N$ and $\varepsilon>0$. By the above characterisation we can find $\phi:\ell_\infty^n\longrightarrow X$ with $\Vert \phi\Vert\leq 1$ and such that
$$\Vert (e_1,0)\pm\phi(y)\Vert>(1-\varepsilon)(1+\Vert y\Vert)$$
holds for every $y\in \ell_\infty^n$. 

Take $Q:X=\ell_\infty\oplus_1 \ell_1\longrightarrow \ell_1$ the natural projection. Let us prove that $Q\circ\phi: \ell_\infty^n\longrightarrow \ell_1$ satisfies that
$$(1-2\varepsilon)\Vert y\Vert\leq \Vert Q(\phi(y))\Vert\leq \Vert y\Vert,$$
from where the arbitrariness of $n$ and $\varepsilon$ will imply that $\ell_\infty$ is finitely-representable in $\ell_1$, which is a contradiction because $\ell_1$ has cotype 2 and the finite representability of $\ell_\infty$ in a Banach space $Z$ implies that $Z$ fails to have cotype $q$ for every $q<\infty$ \cite[Theorem 11.1.14]{alka}.

So take $x\in S_{\ell_\infty^n}$, call $\phi(x):=(a,b)\in \ell_\infty\oplus_1 \ell_1$ and notice that $\Vert a\Vert_\infty+\Vert b\Vert_1\leq 1$. Note that
$$2(1-\varepsilon)<\Vert (e_1,0)\pm \phi(x)\Vert=\Vert (e_1\pm a)\Vert_\infty+\Vert b\Vert_1.$$
It is direct computation that either $\Vert e_1+a\Vert_\infty\leq 1$ or $\Vert e_1-a\Vert_\infty\leq 1$. Assume without loss of generality that $\Vert e_1+a\Vert_\infty\leq 1$. Now
$$2(1-\varepsilon)\leq \Vert e_1+a\Vert_\infty+\Vert b\Vert_1\leq 1+\Vert b\Vert_1,$$
which implies $\Vert b\Vert_1\geq 1-2\varepsilon$.

This implies that $\Vert Q(\phi(x))\Vert\geq 1-2\varepsilon$. The arbitrariness of $x\in S_{\ell_\infty^n}$ forces that $\Vert Q(\phi(x))\Vert\geq 1-2\varepsilon$ holds for every $x\in S_{\ell_\infty^n}$, and a homogeneity argument yields that, for every $x\in \ell_\infty^n$, we get
$$(1-2\varepsilon)\Vert x\Vert\leq \Vert Q(\phi(x))\Vert\leq \Vert Q\Vert \Vert \phi\Vert \Vert x\Vert\leq \Vert x\Vert,$$
as desired.
\end{example}

Another exotic example is the following.

\begin{example}\label{example:uninoc0octa}
Let $X:=(\oplus_{i=1}^\infty \ell_\infty^i)_1$. It is immediate by the main characterisation that $X$ is universally octahedral. Indeed, given $n\in\mathbb N$, $\varepsilon>0$ and $\{z_1,\ldots, z_k\}\subseteq S_X$, it is enough by Remark \ref{remark:homogeneity} to find an operator $T:\ell_\infty^n\longrightarrow X$ with
$$\Vert z_i+T(y)\Vert>(1-\varepsilon)(1+\Vert y\Vert)$$
for every $1\leq i\leq k$ and every $y\in S_{\ell_\infty^n}$. Up to a density argument we can assume that $z_i\in (\oplus_{i=1}^\infty \ell_\infty^i)$ have finite support (say contained in $\oplus_{i=1}^p \ell_\infty^i$). Take $q>\max\{p, n\}$. Now take the canonical inclusion operator $\phi:\ell_\infty^n\hookrightarrow \ell_\infty^q$. Take the canonical inclusion operator $j:\ell_\infty^q\hookrightarrow \oplus_{i=1}^\infty \ell_\infty^i$ by $j(x)(q)=x$ if $i=q$ and $0$ otherwise. Now $T=j\circ\phi: \ell_\infty^n\longrightarrow X$ satisfies the desired requirement. In fact, given any $y\in \ell_\infty^n$ observe that, since the support of $z_i$ and $T(y)$ are disjoint by construction, we obtain that
$$\Vert z_i+T(y)\Vert=\Vert z_i\Vert+\Vert T(y)\Vert=1+\Vert y\Vert$$
since $T$ is an isometry. This proves that $X$ is universally octahedral.

However $X$ does not contain $c_0$ isomorphically. Indeed, $X=(\oplus_{i=1}^\infty \ell_1^i)_\infty^*$ is a dual space, so if $X$ contained $c_0$ then it would indeed contain $\ell_\infty$ by \cite[Theorem 6.39]{checos}, which is impossible since $X$ is clearly separable.
\end{example}

Let us end with an observation concerning the existence of $L$-orthogonal elements.

\begin{remark}\label{remark:Lorthogonaluniversal} Let $X$ be a Banach space. Following the notation of \cite{loru2021}, we say that an element $u\in X^{**}$ is an \textit{$L$-orthogonal element} if it satisfies
$$\Vert x+u\Vert=\Vert x\Vert+\Vert u\Vert$$
holds for every $x\in X$.

The existence of non-zero $L$-orthogonal elements is strongly connected with octahedral norms. It is a consequence of the Principle of Local Reflexivity (and explicitly mentioned in  \cite[Lemma 9.1]{gk}) that if $X$ has a non-zero $L$-orthogonal element then the norm of $X$ is octahedral. Moreover, the converse is true if $X$ is separable \cite[Lemma 9.1]{gk}. The question whether octahedrality implies the existence of non-zero $L$-orthogonals has remained open until the recent work \cite{loru2021}, where many examples of octahedral spaces without any non-zero $L$-orthogonal element is exhibited. 

A natural question at this point is whether or not there exists a Banach space $X$ satisfying that, for every non-zero Banach space $Y$ and for every $H\subseteq L(Y,X)$ containing $F(Y,X)$, the space $H$ has non-zero $L$-orthogonal elements.

The answer is no. Indeed, given any Banach space $X$, taking $Y=\ell_2(I)$ and $H$ to be the space of compact operators from $Y$ to $X$ (denoted by $K(Y,X)$), if $H$ has a non-zero $L$-orthogonal element, then $Y$ is isometrically isomorphic to a subspace of $X^{**}$ by \cite[Lemma 3.1]{loru2021}, so it remains to take $I$ big enough so that there is no injective mapping $\phi:I\longrightarrow X^{**}$ to conclude that $K(\ell_2(I),X)$ does not have any non-zero $L$-orthogonal element.
\end{remark}

\section{Examples}\label{section:examples}

In this section we will analyse examples of universally octahedral Banach spaces. Observe that the condition in Theorem \ref{theo:condisufi} is very difficult to check in a particular example. So, in order to provide examples where universal octahedrality holds, we give a criterion which implies it. In order to save notation, let us make the following definition.

\begin{definition}\label{defi:c0octa}
Let $X$ be a Banach space. We say that $X$ is \textit{$c_0$-octahedral} if, for every $x_1,\ldots, x_n\in S_X$ and every $\varepsilon>0$, there exists a subspace $Y\subseteq X$ such that
\begin{enumerate}
\item $\Vert x_i+y\Vert>(1-\varepsilon)(1+\Vert y\Vert)$ holds for every $y\in Y$ and;
\item $Y$ is isometric to $c_0$.
\end{enumerate}
\end{definition}

\begin{remark}\label{rem:c0octaimpliesunivers}
It is quite easy to prove (up to applying Remark \ref{remark:homogeneity}) that $c_0$-octahedrality implies universal octahedrality. Moreover, a homogeneity argument similar to that of the end of Lemma \ref{lemma:uconvexo} allows to show that a Banach space $X$ is $c_0$-octahedral if, and only if, for every finite dimensional subspace $Z\subseteq X$ and every $\varepsilon>0$, there exists a subspace $Y\subseteq X$ such that
\begin{enumerate}
\item $\Vert z+y\Vert>(1-\varepsilon)(\Vert z\Vert+\Vert y\Vert)$ holds for every $y\in Y$ and $z\in Z$ and;
\item $Y$ is isometric to $c_0$.
\end{enumerate}
\end{remark}

\begin{example}\label{example:Lip}
By \cite[Proposition 3.3]{laru2020} we have that if $\Lip(M)$ is octahedral then $\Lip(M)$ is $c_0$-octahedral (and consequently universally octahedral).
\end{example}

In general, there are Banach spaces which are universally octahedral but not $c_0$-octahedral, and one example is given in Example \ref{example:uninoc0octa}. Consequently, if $X$ is universally octahedral it does not imply that $X$ is $c_0$-octahedral. However, we can guarantee that all its ultrapowers are $c_0$-octahedral.

\begin{proposition}\label{prop:ultrapowers}
Let $X$ be a Banach space and let $\mathcal U$ be a free ultrafilter over $\mathbb N$. Then $X$ is universally octahedral if, and only if, $X_\mathcal U$ is $c_0$-octahedral.
\end{proposition}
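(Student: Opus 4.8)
The plan is to prove both implications through the local characterisation of universal octahedrality, namely condition (6) of Theorem \ref{theo:carauniocta}, together with the subspace reformulation of $c_0$-octahedrality given in Remark \ref{rem:c0octaimpliesunivers}. Throughout I will use the two basic facts about ultrapowers recalled in Section~2: the norm formula $\Vert(x_m)_\mathcal U\Vert=\lim_\mathcal U\Vert x_m\Vert$, and that $X$ embeds isometrically into $X_\mathcal U$ as constant sequences. I will also use that $c_0$-octahedrality implies universal octahedrality (Remark \ref{rem:c0octaimpliesunivers}).

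For the implication that universal octahedrality of $X$ yields $c_0$-octahedrality of $X_\mathcal U$, I would argue by a diagonal construction. Fix a finite dimensional $Z=\spann\{\zeta_1,\dots,\zeta_k\}\subseteq X_\mathcal U$ with representatives $\zeta_i=(\zeta_{i,m})_\mathcal U$, and for each $m$ set $Z_m:=\spann\{\zeta_{1,m},\dots,\zeta_{k,m}\}\subseteq X$. Applying condition (6) of Theorem \ref{theo:carauniocta} to $Z_m$, to $n=m$ and to $\varepsilon=\tfrac1m$, I obtain a norm-one operator $T_m:\ell_\infty^m\longrightarrow X$ with $\Vert w+T_m(y)\Vert\geq(1-\tfrac1m)(\Vert w\Vert+\Vert y\Vert)$ for all $w\in Z_m$, $y\in\ell_\infty^m$. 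I then define $u_j:=(T_m(e_j))_\mathcal U\in X_\mathcal U$ (understanding $T_m(e_j)=0$ when $m<j$), and set $Y:=\overline{\spann}\{u_j:j\in\mathbb N\}$. Since $\mathcal U$ is free we have $\lim_\mathcal U\tfrac1m=0$ and $\{m\geq N\}\in\mathcal U$; hence, for finitely supported scalars $(a_j)_{j\leq N}$, the identity $\sum_{j\leq N}a_jT_m(e_j)=T_m(\sum_{j\leq N}a_je_j)$ (valid once $m\geq N$) gives $\Vert\sum_{j\leq N}a_ju_j\Vert=\lim_\mathcal U\Vert T_m(\sum a_je_j)\Vert=\max_j\vert a_j\vert$, so $(u_j)$ is isometrically the $c_0$-basis and $Y$ is isometric to $c_0$. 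The same computation with $w_m:=\sum_i c_i\zeta_{i,m}\in Z_m$, together with $\lim_\mathcal U\Vert w_m\Vert=\Vert z\Vert$ for $z=\sum_i c_i\zeta_i$, yields $\Vert z+\sum_{j\leq N}a_ju_j\Vert\geq\Vert z\Vert+\max_j\vert a_j\vert$; as the reverse inequality is the triangle inequality, this is an equality, and by continuity $\Vert z+y\Vert=\Vert z\Vert+\Vert y\Vert$ for all $z\in Z$, $y\in Y$. This is even stronger than Remark \ref{rem:c0octaimpliesunivers} requires, so $X_\mathcal U$ is $c_0$-octahedral.

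For the converse, assume $X_\mathcal U$ is $c_0$-octahedral, hence universally octahedral, hence satisfies condition (6). I would verify the finite-set form of universal octahedrality for $X$ provided by Remark \ref{remark:homogeneity}. Fix $n$, finite sets $\{z_i\}\subseteq S_X$ (put $Z:=\spann\{z_i\}$) and $\{y_j\}\subseteq S_{\ell_\infty^n}$, and $\varepsilon>0$; enlarging $\{y_j\}$ if necessary, I may assume it contains a fine net of $S_{\ell_\infty^n}$. Viewing $Z\subseteq X_\mathcal U$ and applying condition (6) there produces $\tilde T:\ell_\infty^n\to X_\mathcal U$ of norm $\leq1$ that is almost $L$-orthogonal to $Z$; writing $\tilde T(e_p)=(w_{p,m})_\mathcal U$ and $T_m(e_p):=w_{p,m}$, each quantity $\Vert z_i+T_m(y_j)\Vert$ and $\Vert T_m(y_j)\Vert$ has a prescribed $\mathcal U$-limit (close to $2$ and at most $1$, respectively). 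Intersecting the corresponding $\mathcal U$-large sets over the finitely many pairs $(i,j)$ selects a single index $m$ for which $\Vert z_i+T_m(y_j)\Vert$ is close to $2$ and $\Vert T_m(y_j)\Vert$ is close to $1$ for all $i,j$; Lemma \ref{lema:caraepsisometrynets} then bounds $\Vert T_m\Vert$, and normalising $T:=T_m/\Vert T_m\Vert$ gives a norm-one operator with $\Vert z_i+T(y_j)\Vert>2-\varepsilon$. By Remark \ref{remark:homogeneity} this establishes the required finite-set condition for every $n$, so $X$ is universally octahedral.

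The routine parts are the $\varepsilon$-bookkeeping and the passage from net points to the whole subspace, a homogeneity argument identical to the end of Lemma \ref{lemma:uconvexo}. The step I expect to be most delicate is the coherence in the diagonal construction of the first implication: the vectors $u_j$ must simultaneously form an isometric $c_0$-basis and be $L$-orthogonal to all of $Z$, and the crux is that a single family $(T_m)_m$ of finite dimensional operators, with both $n$ and the orthogonality parameter varying with $m$, serves every finite $N$ at once precisely because only the cofinite tail $\{m\geq N\}\in\mathcal U$ is seen by the ultrafilter limit. In the converse direction the analogous delicate point is controlling $\Vert T_m\Vert$ for the chosen representative, which is exactly what Lemma \ref{lema:caraepsisometrynets} is designed to handle.
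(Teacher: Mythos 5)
Your proposal is correct and follows essentially the same route as the paper: the forward direction is the same diagonal construction (operators $T_m:\ell_\infty^m\to X$ with parameter $1/m$, assembled into a $c_0$-basis in $X_{\mathcal U}$), and the backward direction is the same coordinatewise extraction of a finite-dimensional operator at a single $\mathcal U$-good index, with Lemma \ref{lema:caraepsisometrynets} controlling the norm via a net. The only cosmetic difference is that you route the converse through condition (6) for $X_{\mathcal U}$ rather than using the $c_0$-subspace directly, which changes nothing essential.
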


\begin{proof}
Let us start assuming that $X$ is universally octahedral and let us prove that $X_\mathcal U$ is $c_0$-octahedral. To prove $c_0$-octahedrality, pick $(z_n^1),\ldots, (z_n^p)\in S_{X_\mathcal U}$. We can assume, up to changing of representative, that $\Vert z_n^i\Vert=1$ holds for every $1\leq i\leq p$ and $n\in\mathbb N$. Since $X$ is universally octahedral then, for every $n\in\mathbb N$ we can find an operator $T_n:\ell_\infty^n\longrightarrow X$ with $\Vert T_n\Vert\leq 1$ such that
$$\Vert z_n^i+T_n(x)\Vert\geq \left(1-\frac{1}{n}\right)(1+\Vert x\Vert)$$
holds for every $x\in \ell_\infty^n$. Now define $T:(\ell_\infty^n)_\mathcal U\longrightarrow X_\mathcal U$ by the equation
$$T((x_n)):=(T_n(x_n))\ \forall (x_n)\in (\ell_\infty^n)_\mathcal U.$$
It is clear that $\Vert T\Vert\leq 1$. Moreover, given $1\leq i\leq p$ and $(x_n)\in (\ell_\infty^n)_\mathcal U$, we get
\[\begin{split}
\Vert (z_n^i)+T((x_n))\Vert_\mathcal U=\lim_\mathcal U \Vert z_n^i+T_n(x_n)\Vert& \geq \lim_\mathcal U \left(1-\frac{1}{n}\right)(1+\Vert x_n\Vert)\\
& =1+\lim_\mathcal U \Vert x_n\Vert=1+\Vert (x_n)\Vert_\mathcal U,
\end{split}\]
from where $\Vert (z_n^i)+T((x_n))\Vert_\mathcal U=1+\Vert (x_n)\Vert_\mathcal U$. In particular, $T$ is an isometry so, in order to finish the proof, it remains to show that $(\ell_\infty^n)_\mathcal U$ contains an isometric copy of $c_0$. Take $k\in\mathbb N$ and define $(x_n^k)\in (\ell_\infty^n)_\mathcal U$ by
$$x_n^k:=\left\{\begin{array}{cc}
e_k & \mbox{ if }n\geq k,\\
0 &\mbox{ otherwise.}
\end{array} \right.$$
It is not difficult to prove that, given $\lambda_1,\ldots, \lambda_k\in\mathbb R$, for $n\geq k$ we obtain that $\Vert \sum_{i=1}^k \lambda_i x_n^i\Vert=\max\limits_{1\leq i\leq k} \vert \lambda_i\vert$, so
$$\left\Vert \sum_{i=1}^k\lambda_i (x_n^i)\right\Vert_\mathcal U=\lim_\mathcal U \left\Vert \sum_{i=1}^k \lambda_i x_n^i\right\Vert=\max\limits_{1\leq i\leq k} \vert \lambda_i\vert,$$
which proves that $(x_n^k)$ is isometric to the $c_0$-basis. It remains to take $Y:=\overline{\spann}\{T(x_n^k):k\in\mathbb N\}$ to get the desired subspace.

For the converse, let $z_1,\ldots, z_q\in S_X$, $\varepsilon>0$ and $n\in\mathbb N$, and let us find $T:\ell_\infty^n\longrightarrow X$ with $\Vert T\Vert\leq 1+\varepsilon$ and such that
$$\Vert z_i+T(y)\Vert\geq (1-\varepsilon)(1+\Vert y\Vert)$$
holds for every $1\leq i\leq q$. Call $j:X\longrightarrow X_\mathcal U$ the natural embedding by $j(x):=(x)_\mathcal U$. Take $\nu\in\mathbb R^+$ small enough such that $(1+\nu)(1-3\nu)^{-1}<1+\varepsilon$ and such that $\nu (1+\varepsilon)<\varepsilon$. Since $X_\mathcal U$ is $c_0$-octahedral we can find $Y\subseteq X_\mathcal U$ such that $Y$ is isometric to $c_0$ and such that
$$\Vert j(z_i)+x_k\Vert_\mathcal U>\left(1-\frac{\nu}{2}\right)(1+\Vert (x_k)\Vert_\mathcal U)=\left(1-\frac{\nu}{2}\right)(1+\lim_\mathcal U \Vert x_k\Vert)$$
holds for every $(x_n)\in Y$. Since $Y$ is isometric to $c_0$ there exists an isometry $\phi:\ell_\infty^n\longrightarrow Y$. Take $F\subseteq B_{\ell_\infty^n}$ a $\nu$-net and observe that
$$\lim_\mathcal U \Vert z_i+\phi(z)(k)\Vert\geq \left(1-\frac{\nu}{2}\right)(1+\Vert z\Vert)$$
holds for every $z\in F$. Since $F$ is finite we can find $A\in\mathcal U$ such that 
$$(1-\nu)(1+\Vert z\Vert)\leq \Vert z_i+\phi(z)(k)\Vert\leq (1+\nu)(1+\Vert z\Vert)$$
and
$$(1-\nu)\Vert z\Vert\leq \Vert \phi(z)(k)\Vert\leq (1+\nu)\Vert z\Vert$$
holds for every $z\in F$ and every $k\in A$. 

Now select any $k\in A$ and define a linear operator $T:\ell_\infty^n\longrightarrow X$ by $T(x):=\phi(x)(k)$ for every $x\in \ell_\infty^n$.

Observe, on the one hand, that given $z\in F$, we get
$$(1-\nu)\Vert z\Vert\leq \Vert T(z)\Vert\leq (1+\nu)\Vert z\Vert$$
and, since $F$ is a $\nu$-net of $B_{\ell_\infty^n}$ and $(1+\nu)(1-3\nu)^{-1}<1+\varepsilon$, we get from Lemma \ref{lema:caraepsisometrynets} that
$$(1-\varepsilon)\Vert z\Vert\leq \Vert T(z)\Vert\leq (1+\varepsilon)\Vert z\Vert $$
holds for every $z\in S_{\ell_\infty^n}$.
Now, given $x\in B_{\ell_\infty^n}$, take $z\in F$ such that $\Vert x-z\Vert<\nu$.
\[
\begin{split}
\Vert z_i+T(x)\Vert\geq \Vert z_i+T(z)\Vert-\Vert T(x-z)\Vert& >(1-\nu)(1+\Vert z\Vert)-(1+\varepsilon)\Vert z-x\Vert\\
& \geq (1-\nu)(1+\Vert x\Vert-\Vert x-z\Vert)-(1+\varepsilon)\nu\\
& > (1-\nu)(1+\Vert x\Vert-\nu)-(1+\varepsilon)\nu\\
& \geq (1-\nu)(1+\Vert x\Vert -(1+\Vert x\Vert)\nu)-\varepsilon\\
& \geq (1-\nu)^2(1+\Vert x\Vert)-(1+\Vert x\Vert)\varepsilon\\
& = ((1-\nu)^2-\varepsilon)(1+\Vert x\Vert).
\end{split}
\]
Now given $x\in \ell_\infty^n$ with $\Vert x\Vert>1$ we finally get
\[
\begin{split}
\Vert z_i+T(x)\Vert& =\left\Vert \Vert x\Vert \left(z_i+T\left(\frac{x}{\Vert x\Vert}\right)\right)-(\Vert x\Vert-1)z_i\right\Vert\\
& \geq \Vert x\Vert \left\Vert z_i+T\left(\frac{x}{\Vert x\Vert}\right) \right\Vert-(\Vert x\Vert-1)\Vert z_i\Vert\\
& >\Vert x\Vert 2((1-\nu)^2-\varepsilon)-\Vert x\Vert+1\\
& =\Vert x\Vert (2-4\nu+2\nu^2-2\varepsilon)-\Vert x\Vert+1\\
& =\Vert x\Vert (1-4\nu+2\nu^2-2\varepsilon)+1\\
& > (1-4\nu+2\nu^2-2\varepsilon)(\Vert x\Vert+1),
\end{split}
\]
and the proof is finished.
\end{proof}

\begin{remark}
Observe that in the above proof we have obtained in $X_\mathcal U$ $c_0$-octahedrality with the constant $\varepsilon=0$. This is not surprising because J. D. Hardtke proved \cite[Proposition 2.10]{hardtke} that, given any free ultrafilter $\mathcal U$ over $\mathbb N$,  a Banach space $X$ is octahedral if, and only if, $X_\mathcal U$ satisfies that for every $(z_n^1),\ldots, (z_n^k)\in S_{X_\mathcal U}$ there exists $(z_n)\in S_{X_\mathcal U}$ such that $\Vert (z_n^k)+(z_n)\Vert=2$.
\end{remark}

We end the paper by proving that $L_1$-preduals with octahedral norms are actually universally octahedral, which enlarges our class of Banach spaces which are universally octahedral.

\begin{theorem}\label{theo:L1preduals}
Let $X$ be an $L_1$-predual. If $X$ is octahedral then $X$ is universally octahedral.
\end{theorem}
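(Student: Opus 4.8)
The plan is to verify condition (6) of Theorem \ref{theo:carauniocta}; by Remark \ref{remark:homogeneity} it is enough, given a finite net $\{z_1,\dots,z_k\}\subseteq S_X$, an $\varepsilon>0$ and an $n\in\mathbb N$, to produce a norm-one operator $T\colon\ell_\infty^n\longrightarrow X$ with $\Vert z_i+T(y)\Vert>2-\varepsilon$ for all $i$ and all $y\in S_{\ell_\infty^n}$. Writing $Z:=\spann\{z_1,\dots,z_k\}$, this amounts to embedding $Z\oplus_1\ell_\infty^n$ almost isometrically into $X$ so that the embedding extends the inclusion of $Z$ and the image of $\ell_\infty^n$ is $\ell_1$-orthogonal to $Z$. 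The two ingredients I would combine are the octahedrality of $X$ and the isometric extension property of $L_1$-preduals recorded in Theorem \ref{theo:lindenstrauss}.

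The decisive structural fact is that, since $X$ is an $L_1$-predual, $X^*=L_1(\mu)$ and hence $X^{**}=L_\infty(\mu)$ is isometrically a $C(K)$-space, inside which $X$ sits canonically. First I would feed octahedrality of $X$ into the relationship with $L$-orthogonal elements from \cite{loru2021} (and the tools of \cite{mr2022}) to produce a norm-one $u$ that is $L$-orthogonal to $Z$, i.e.\ $\Vert z+u\Vert=\Vert z\Vert+\Vert u\Vert$ for every $z\in Z$; in the $C(K)$ picture this says that the peak set of $u$ meets, for each $z$ in a finite net of $S_Z$, the region where $z$ nearly attains its norm. To manufacture an isometric $\ell_\infty^n$ that is simultaneously $L$-orthogonal to $Z$ I would split the peak set of $u$ into $n$ pairwise disjoint clopen pieces $A_1,\dots,A_n$ and take $v_1,\dots,v_n$ supported on them with $\Vert v_i\Vert=1$; disjointness of supports forces $\Vert\sum_i\lambda_iv_i\Vert=\max_i|\lambda_i|$, so $\spann\{v_1,\dots,v_n\}$ is isometric to $\ell_\infty^n$, while each $A_i$ is arranged to still meet every relevant peak region, so that each $v_i$, and in fact every combination $\sum_i\lambda_iv_i$, remains $L$-orthogonal to $Z$.

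These $v_i$ a priori live in $X^{**}=C(K)$; to land inside $X$ with only a $(1+\varepsilon)$-loss I would use the Principle of Local Reflexivity together with the extension property of Theorem \ref{theo:lindenstrauss}, transferring the finite-dimensional configuration $Z\oplus_1\spann\{v_1,\dots,v_n\}$ down to $X$. A cleaner way to organise the whole argument, and the one I would ultimately prefer, is to pass first to an ultrapower: by Proposition \ref{prop:ultrapowers} it is enough to show that $X_{\mathcal U}$ is $c_0$-octahedral, and in the ultrapower the almost-$L$-orthogonal elements granted by octahedrality become honest $L$-orthogonal elements realised inside the space itself, while $X_{\mathcal U}$ remains an $L_1$-predual because the extension property in Theorem \ref{theo:lindenstrauss} is a local condition stable under ultraproducts. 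There the splitting construction produces an isometric copy of $c_0$ that is $L$-orthogonal to any prescribed finite-dimensional subspace, which is exactly $c_0$-octahedrality, and then Remark \ref{rem:c0octaimpliesunivers} together with Proposition \ref{prop:ultrapowers} closes the argument.

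The \emph{main obstacle} is the splitting step. A single $L$-orthogonal element only guarantees that its peak set meets each peak region $\{\,|z|\approx\Vert z\Vert\,\}$, and it is not automatic that this peak set can be partitioned into $n$ (or countably many) disjoint clopen pieces each of which still meets every one of these regions for the whole net of $Z$. Overcoming this requires genuinely exploiting the richness of the $L_1$-predual structure---the hyperstonean divisibility features of $K$ in $X^{**}=C(K)$, or equivalently the extra room available in the ultrapower---and this is precisely where I expect the technical results of \cite{mr2022} to be needed; once the $\ell_\infty^n$ (resp.\ $c_0$) system is in hand, the transfer to $X$ and the homogeneity normalisation are routine.
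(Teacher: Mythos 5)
Your overall strategy---manufacture an almost isometric copy of $\ell_\infty^n$ (or of $c_0$) inside $X^{**}$ that is $\ell_1$-orthogonal to the prescribed finite-dimensional subspace $Z$, and then bring the finite configuration down to $X$ by the Principle of Local Reflexivity---is exactly the paper's. But the step you yourself flag as the ``main obstacle'' is precisely the mathematical content of the theorem, and your proposal does not supply it. A single element $u$ that is $L$-orthogonal to $Z$ carries no information about whether its ``support'' can be divided among the peak regions of the $z_i$; and the natural fallback of iterating octahedrality (choose $u_2$ almost $L$-orthogonal to $\spann(Z\cup\{u_1\})$, and so on) produces a sequence with $\Vert\sum_i\lambda_iu_i\Vert=\sum_i|\lambda_i|$, i.e.\ an $\ell_1$-system, which is useless here: condition (6) of Theorem \ref{theo:carauniocta} requires the sup-norm behaviour $\Vert\sum_i\lambda_iv_i\Vert=\max_i|\lambda_i|$. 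This difficulty survives unchanged in the ultrapower, so passing to $X_{\mathcal U}$ via Proposition \ref{prop:ultrapowers} reorganises the argument but does not resolve it.

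The paper closes the gap in Lemma \ref{lemma:L1preduoctae} as follows. Octahedrality together with the $L_1$-predual structure gives the Daugavet property \cite{becemartin}, and then \cite[Theorem 3.2]{mr2022} shows that for each $i$ and each sign the set $V_i^\pm=\{e^*\in\ext{B_{X^*}}: e^*(\pm x_i)>1-\varepsilon\}$ is \emph{infinite}. Since $X^*=L_1(\mu)$, each extreme point of $B_{X^*}$ corresponds to an atom of $\mu$ and spans an $L$-summand; one therefore selects, for each $n\in\mathbb N$, a fresh $2k$-tuple of such atoms, one from each $V_i^\pm$, and defines $x_n^{**}\in X^{**}$ to be the sum-of-coordinates functional on the resulting $\ell_1^{2k}$-block, vanishing on its $L$-complement. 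Disjointness of the blocks yields $\Vert\sum_n\lambda_nx_n^{**}\Vert=\max_n|\lambda_n|$, while membership of the chosen atoms in $V_i^\pm$ yields $\Vert x_i+v\Vert>1-\varepsilon+\Vert v\Vert$ for all $v$ in their span. This is the precise, quantitative version of your ``split the peak set into disjoint pieces each of which still meets every peak region'': the infinitude of the sets $V_i^\pm$ is what makes the splitting possible, and it is a consequence of the Daugavet property via \cite{mr2022}, not of octahedrality alone. Without identifying and invoking this input, your argument does not close; with it, the remaining transfer to $X$ by local reflexivity is, as you say, routine.
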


For the proof we will need the following lemma which says that octahedral $L_1$-preduals are close to be $c_0$-octahedral when we look at the bidual space.

\begin{lemma}\label{lemma:L1preduoctae}
Let $X$ be an $L_1$ predual. If $X$ is octahedral then, for every $x_1,\ldots, x_k\in S_X$ and every $\varepsilon>0$ there exists a sequence $\{x_n^{**}\}\subseteq S_{X^{**}}$ such that
\begin{enumerate}
\item $\{x_n^{**}\}$ is isometric to the usual basis of $c_0$ and,
\item $\Vert x_i+v\Vert>1-\varepsilon+\Vert v\Vert$ holds for every $1\leq i\leq k$ and $v\in \overline{\spann}\{x_n^{**}: n\in\mathbb N\}$.
\end{enumerate}
\end{lemma}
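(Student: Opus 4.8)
The plan is to pass to the bidual, which for an $L_1$-predual is a $C(K)$ space, and to realise the required $c_0$-sequence concretely as a family of \emph{disjointly supported} norm-one functions on $K$ whose peaks are placed on the level sets where each $x_i$ is close to $\pm 1$. Disjointness of supports is exactly what produces the isometric copy of $c_0$, while the placement of the peaks is what produces the additive orthogonality $\|x_i+v\|>1-\varepsilon+\|v\|$.

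First I would fix the isometric identification $X^{**}=(L_1(\mu))^{*}=L_\infty(\mu)=C(K)$ coming from $X^*=L_1(\mu)$, and view each $x_i\in S_X$ as a norm-one function on the compact space $K$, attaining its norm since $K$ is compact. For each $i$ I set the peak region $R_i:=\{t\in K:\ |x_i(t)|\geq 1-\varepsilon\}$, which is nonempty because $\|x_i\|=1$. This is precisely the step where octahedrality of $X$ together with the $L_1$-predual structure must be used, and where I would invoke the tools of \cite{mr2022}: the point is to guarantee that each $R_i$ is rich enough (carries a perfect, hence infinite, structure) to contain infinitely many pairwise disjoint pieces on each of which one can build a continuous function on $K$ oscillating between $+1$ and $-1$. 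Informally, octahedrality prevents the norm of any $x_i$ from being attained on an isolated or strongly exposed part of $K$, so the norm-attaining sets in the bidual are fat. I expect this to be the genuine obstacle; everything afterwards is $C(K)$-bookkeeping.

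Granting the richness of the $R_i$, I would construct the sequence directly. Inside each $R_i$ choose a sequence of pairwise disjoint pieces, shrinking them (using Urysohn's lemma on $K$) so that across all indices $1\leq i\leq k$ and all stages $n\in\mathbb N$ the chosen pieces $V_i^{\,n}\subseteq R_i$ are \emph{mutually} disjoint. For each $n$ define $x_n^{**}\in C(K)$ supported on $\bigcup_{i=1}^k V_i^{\,n}$, attaining both values $+1$ and $-1$ on each $V_i^{\,n}$, and with $\|x_n^{**}\|=1$. Since the supports are pairwise disjoint and each $x_n^{**}$ is norm-one, one gets $\big\|\sum_n\lambda_n x_n^{**}\big\|=\sup_n|\lambda_n|$, so $\{x_n^{**}\}$ is isometric to the usual $c_0$-basis and its closed linear span is isometric to $c_0$, giving item (1).

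Finally I would verify the orthogonality estimate. Given $v=\sum_n\lambda_n x_n^{**}$ with $(\lambda_n)\in c_0$, the norm $\|v\|=\sup_n|\lambda_n|$ is attained at some index $n^*$, and by disjointness $v=\lambda_{n^*}x_{n^*}^{**}$ on $\mathrm{supp}(x_{n^*}^{**})$. Fix $i$. On the piece $V_i^{\,n^*}\subseteq R_i$ the function $x_{n^*}^{**}$ attains both $+1$ and $-1$, while $|x_i|\geq 1-\varepsilon$ throughout $V_i^{\,n^*}$; hence, whatever the sign of $\lambda_{n^*}$ and whichever extreme value $x_i$ takes there, one of these two peaks reinforces $\lambda_{n^*}x_{n^*}^{**}$ with $x_i$, yielding a point $t$ with $|x_i(t)+v(t)|\geq 1-\varepsilon+|\lambda_{n^*}|=1-\varepsilon+\|v\|$. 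This is exactly why placing both a $+1$-peak and a $-1$-peak of every $x_n^{**}$ inside each $R_i$ is essential: it lets a single family simultaneously handle all sign patterns of $v$ and all indices $i$. Running the argument with $\varepsilon/2$ in place of $\varepsilon$ upgrades the inequality to the strict one and gives item (2), completing the lemma.
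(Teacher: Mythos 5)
Your overall architecture---pass to $X^{**}=L_\infty(\mu)=C(K)$, find for each $x_i$ an infinite supply of pairwise disjoint ``peak pieces'', and build the $c_0$-sequence from disjointly supported norm-one functions whose values on those pieces reinforce $x_i$---is essentially the paper's argument read on the $L_\infty$ side rather than the $L_1$ side. The problem is that the step you explicitly black-box carries the whole content of the lemma, and the form of ``richness'' you anticipate is not the one that is actually available. The paper's route is: an octahedral space has no point of Fr\'echet differentiability, hence an octahedral $L_1$-predual has the Daugavet property by \cite{becemartin}, and then \cite[Theorem 3.2]{mr2022} gives that for each $i$ and each sign the set $\{e^*\in\operatorname{ext}(B_{X^*}):e^*(\pm x_i)>1-\varepsilon\}$ is \emph{infinite}. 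Since the extreme points of $B_{L_1(\mu)}$ are normalized atoms, in your $C(K)$ picture this yields infinitely many \emph{isolated points} of $K$ in each of $\{x_i>1-\varepsilon\}$ and $\{x_i<-(1-\varepsilon)\}$---and in general nothing more. Your heuristic (``octahedrality prevents the norm from being attained on an isolated part of $K$, so the norm-attaining sets are fat, perhaps perfect'') points in the wrong direction: the usable richness is precisely a wealth of isolated atoms. Consequently a ``piece'' $V_i^{\,n}$ must in general be a pair of singletons, and your continuous function oscillating between $+1$ and $-1$ on $V_i^{\,n}$ has to be realized as $+1$ at one atom and $-1$ at another; continuous bumps on connected or perfect subsets of the peak region need not exist.

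There is a second, smaller gap in your verification of item (2): you need $x_i$ to have \emph{constant sign} on $V_i^{\,n^*}$. If the $+1$-peak of $x_{n^*}^{**}$ sits where $x_i\geq 1-\varepsilon$ while its $-1$-peak sits where $x_i\leq -(1-\varepsilon)$, then for $\lambda_{n^*}<0$ neither point works: at the first the two terms partially cancel, and at the second $x_i+v$ lies in the interval $\left[-1+|\lambda_{n^*}|,\,-(1-\varepsilon)+|\lambda_{n^*}|\right]$, whose elements can be small in absolute value. So each $V_i^{\,n}$ must be chosen inside a single sign-region $\{\pm x_i>1-\varepsilon\}$ (which the infinitude of both sets permits), or else you should adopt the paper's arrangement, in which $x_n^{**}$ takes the value $+1$ both at an atom where $x_i>1-\varepsilon$ and at an atom where $x_i<-(1-\varepsilon)$, evaluating at the first when $\lambda_{n^*}\geq 0$ and at the second when $\lambda_{n^*}<0$. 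With these two repairs---quoting the correct statement of \cite[Theorem 3.2]{mr2022} via the Daugavet property, and fixing the sign bookkeeping---your plan becomes a faithful translation of the paper's proof, which carries out the same construction with $L$-projections onto the spans of the chosen atoms in $X^*=L_1(\mu)$.
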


\begin{proof}
Since $X$ is octahedral then clearly $X$ does not have any point of Fr\'echet differentiability. Since $X$ is an $L_1$-predual we get that $X$ has the Daugavet property \cite[Theorem 2.4]{becemartin}. 

By \cite[Theorem 3.2]{mr2022} we conclude that, for every $\varepsilon>0$, the set
$$V_i^\pm:=\{e^*\in \ext{B_{X^*}}: e^*(\pm x_i)>1-\varepsilon\}$$
is infinite for every $i\in\{1,\ldots, k\}$, where $\ext{B_{X^*}}$ stands for the set of extreme points of $B_{X^*}$.

Consequently we can construct, by an inductive argument, sequences $\{e_{in}^{\pm*}\}_{n\in\mathbb N}$ of elements such that $e_{in}^{\pm*}\in V_i^\pm$ for every $n\in\mathbb N$ and $1\leq i\leq k$ and such that $e_{in}^{\pm*}$ and $e_{jm}^{\pm*}$ are linearly independent if $(i,n)\neq (j,m)$ and, moreover, $e_{ij}^{+*}$ and $e_{ij}^{-*}$ are linearly independent too for every $i,j$.

Since $X^*=L_1(\mu)$ for certain $\mu$ it follows that, for every $1\leq i\leq k$ and every $n\in\mathbb N$, there exists an $L$-projection $P_{in}^\pm:X^*\longrightarrow X^*$ such that $X^*:=\mathbb R e_{in}^{\pm*}\oplus_1 \ker(P_{in}^\pm)$. Now set, for every $n\in\mathbb N$, $P_n:=\sum_{i=1}^k (P_{in}^++P_{in}^-)$. By \cite[Lemma 2.2]{mr2022} we get that $P_n:X^*\longrightarrow X^*$ is an $L$-projection with $B_{P_n(X^*)}=\aconv\{e_{1n}^{\pm*},\ldots, e_{kn}^{\pm*}\}$ (consequently $P_n(X^*)=\spann\{e_{1n}^{\pm*},\ldots, e_{kn}^{\pm*}\}$ is isometrically $\ell_1^{2k}$) and such that $\ker(P_n)=\bigcap\limits_{i=1}^k \ker(P_{in}^+)\cap \ker(P_{in}^-)$. Consequently $X^*:=P_n(X^*)\oplus_1 \ker(P_n)=\spann\{e_{1n}^{\pm*},\ldots, e_{kn}^{\pm*}\}\oplus_1 \ker(P_n)$.

Given $n\in\mathbb N$ define $x_n^{**}:X^*\longrightarrow \mathbb R$ by the equation
$$x_n^{**}\left(\sum_{i=1}^k (\lambda_i^+ e_{in}^{+*}+\lambda_i^-e_{in}^{-*})+z\right)=\sum_{i=1}^k \lambda_i^++\lambda_i^-\ $$
for all $\sum_{i=1}^k (\lambda_i^+ e_{in}^{+*}+\lambda_i^-e_{in}^{-*})+z\in \spann\{e_{1n}^{\pm*},\ldots, e_{kn}^{\pm*}\}\oplus_1 \ker(P_n)=X^*.$
We claim that $\Vert x_n^{**}\Vert\leq 1$. Indeed, given $\sum_{i=1}^k (\lambda_i^+ e_{in}^{+*}+\lambda_i^-e_{in}^{-*})+z\in \spann\{e_{1n}^{\pm*},\ldots, e_{kn}^{\pm*}\}\oplus_1 \ker(P_n)=X^*$, we have that
\[
\begin{split}
\left\vert x_n^{**}\left( \sum_{i=1}^k (\lambda_i^+ e_{in}^{+*}+\lambda_i^-e_{in}^{-*})+z\right) \right\vert& =\left\vert \sum_{i=1}^k \lambda_i^++\lambda_i^-\right\vert \leq \sum_{i=1}^k \vert \lambda_i^+\vert+\vert \lambda_i^-\vert  \\
& \leq  \left\Vert \sum_{i=1}^k \lambda_i^+ e_{in}^{+*}+\lambda_i^- e_{in}^{-*}\right\Vert\\
& \leq \left\Vert \sum_{i=1}^k \lambda_i^+ e_{in}^{+*}+\lambda_i^- e_{in}^{-*}\right\Vert+\Vert z\Vert\\
& =\left\Vert \sum_{i=1}^k \lambda_i^+ e_{in}^{+*}+\lambda_i^- e_{in}^{-*}+z\right\Vert,
\end{split}
\]
which proves that $\Vert x_n^{**}\Vert\leq 1$. Let us prove that $\{x_n^{**}\}$ has the desired properties.

Now it is time to prove that $\{x_n^{**}\}$ is isometric to the $c_0$ basis. To this end pick $\lambda_1,\ldots \lambda_n\in\mathbb R$ and let us estimate the norm of $\sum_{i=1}^n \lambda_i x_i^{**}$. On the one hand, given $1\leq i\leq n$ such that $\max_{1\leq j\leq n}\vert \lambda_j\vert=\vert \lambda_i\vert$, set $\sigma_i:=\operatorname{sign}(\lambda_i)$ (with the convention $\operatorname{sign}(0)=1$). Moreover, given $1\leq i\leq n$, define $\tau_i:=+$ if $\sigma_i=1$ and $\tau_i:=-$ if $\sigma(i)=-1$. Then
$$\left\Vert \sum_{i=1}^n \lambda_ix_i^{**}\right\Vert\geq \sum_{i=1}^n \lambda_i x_i^{**}(\sigma_i e_{in}^{\tau_i *})=\lambda_i \sigma_i=\vert \lambda_i\vert,$$
because clearly $e_{in}\in \bigcap_{j\neq i} \ker(P_j)$.

For the inequality from above, we can apply again \cite[Lemma 2.2]{mr2022} to derive that $P:=\sum_{j=1}^n P_j=\sum_{j=1}^n \sum_{i=1}^k (P_{ij}^++P_{ij}^-)$ is an $L$-projection, that $B_{P(X^*)}=\aconv\{e_{ij}^{\pm *}: 1\leq i\leq k, 1\leq j\leq n \}$ and such that $\ker(P)=\bigcap\limits_{j=1}^n\bigcap_{i=1}^k P_{ij}^\pm$. Now
$$\left\Vert \sum_{j=1}^n \lambda_j x_j^{**}\right\Vert=\sup_{x^*\in B_{X^*}} \sum_{j=1}^n \lambda_j x_j^{**}(x^*).$$
The decomposition $X^*=P(X^*)\oplus_1 \ker(P)$ implies that $B_{X^*}=\overline{\co}(B_{P(X^*)}\cup B_{\ker(P)})=\{\lambda u+(1-\lambda)v: u\in B_{P(X^*)}, v\in B_{\ker(P)}, \lambda\in [0,1]\}$. Since by construction $\sum_{j=1}^n \lambda_j x_j^{**}$  vanishes on $\ker(P)$ we get that
$$\sup_{x^*\in B_{X^*}} \sum_{j=1}^n \lambda_j x_j^{**}(x^*)=\sup_{x^*\in B_{P(X^*)}} \sum_{j=1}^n \lambda_j x_j^{**}(x^*)$$
Moreover, since $B_{P(X^*)}=\aconv(\{e_{ij}^{\pm*}: 1\leq i\leq k, 1\leq j\leq n\})$ we get that
$$\sup_{x^*\in B_{P(X^*)}} \sum_{j=1}^n \lambda_j x_j^{**}(x^*)=\max\limits_{1\leq i\leq k, 1\leq p\leq n}\left\vert\sum_{j=1}^n\lambda_j x_j^{**}(e_{ip}^{\pm*})  \right\vert,$$
and since $x_j^{**}(e_{ip}^{\pm*})=\delta_{jp}$ we concluce that the above maximum coincides with $\max\limits_{1\leq j\leq n}\vert\lambda_j\vert$, as desired.

In order to finish the proof, take $v\in \spann\{x_i^{**}\}$, so $v=\sum_{i=1}^n \lambda_i x_i^{**}$ with $\Vert v\Vert=\max\limits_{1\leq i\leq n} \vert \lambda_i\vert=\vert \lambda_j\vert$ for some $j$. Let us prove that $\Vert x_i+v\Vert>1-\varepsilon+\vert \lambda_j\vert$ and the proof will be finished. To this end, let $1\leq p\leq k$. If $\lambda_j\geq 0$ then
$$\left\Vert x_p+\sum_{i=1}^n \lambda_i x_i^{**}\right\Vert \geq x_p(e_{pj}^{+*})+\sum_{i=1}^n \lambda_i x_i^{**}(e_{pj}^{+*})>1-\varepsilon+\lambda_j=1-\varepsilon+\vert \lambda_j\vert.$$
On the other hand if $\lambda_j<0$ we get
$$x_p(e_{pj}^{-*})+\sum_{i=1}^n \lambda_i x_i^{**}(e_{pj}^{-*})<-1+\varepsilon+\lambda_j=-1+\varepsilon-\vert \lambda_j\vert=-(1-\varepsilon+\vert \lambda_j\vert),$$
from where $\Vert x_p+\sum_{i=1}^n \lambda_i x_i^{**}\Vert\geq 1-\varepsilon+\vert\lambda_j\vert$ too, and the proof is finished.
\end{proof}

\begin{proof}[Proof of Theorem \ref{theo:L1preduals}]
Let $x_1,\ldots, x_k\in S_X$ and $\varepsilon>0$. By Lemma \ref{lemma:L1preduoctae} there exists a sequence $\{x_n^{**}\}\subseteq S_{X^{**}}$ such that $\Vert x_i+v\Vert>1-\varepsilon+\Vert v\Vert$ holds for every $1\leq i\leq k$ and any $v\in \spann\{x_n^{**}: n\in\mathbb N\}$, and such that $\{x_n^{**}\}$ is isometric to the $c_0$ basis.

Now let $n\in\mathbb N$ and take $\phi:\ell_\infty^n\longrightarrow \spann\{x_j^{**}:j\in\mathbb N\}$ the canonical inclusion $\phi(e_j):=x_j^{**}$ for $1\leq j\leq n$. 

Now set $E:=\spann\{x_1,\ldots, x_k, x_1^{**},\ldots, x_n^{**}\}\subseteq X^{**}$ and $\varepsilon>0$. By the Principle of Local Reflexivity we can find an operator $P: E\longrightarrow X$ such that $P(e)=e$ for $e\in E\cap X$ and 
$$(1-\varepsilon)\Vert x\Vert\leq \Vert P(x)\Vert\leq \Vert x\Vert$$
holds for every $x\in E$. Now set $T:=P\circ\phi:\ell_\infty^n\longrightarrow X$. Given $y\in \ell_\infty^n$ we have
\[
\begin{split}
\Vert x_i+T(y)\Vert=\Vert P(x_i)+P(\phi(y))\Vert=\Vert P(x_i+\phi(y))\Vert&\geq (1-\varepsilon)(\Vert x_i+\phi(y)\Vert)\\
& \geq (1-\varepsilon)(1-\varepsilon+\Vert \phi(y)\Vert)\\
& =(1-\varepsilon)^2(1 +\Vert y\Vert)
\end{split}
\]
The arbitrariness of $\varepsilon$ and $y$ yields the desired conclusion.
\end{proof}

Observe that we have proved that, given an $L_1$-predual $X$, we have that $X$ is universally octahedral if, and only if, $X$ is octahedral and if, and only if, $X$ has the Daugavet property. See \cite{mr2022} for examples of $L_1$-preduals with the Daugavet property.

\section*{Acknowledgements}  The author is deeply grateful to the anonymus referee for many helpful suggestions that improved significantly the readability of this paper. The author also thanks A. Avil\'es for fruitful conversations on the topic of the paper. He also thanks J. Langemets and V. Lima for many comments that improved the exposition.

\end{document}